\documentclass{amsart}
\usepackage{amsmath}
\usepackage{amsfonts}
\usepackage{amssymb}
\usepackage{graphicx}
\usepackage{color}
\usepackage{amsmath}
\usepackage{amsfonts}
\usepackage{amssymb}
\usepackage{graphicx}
\usepackage{tikz-cd}
\usepackage{color}
\usepackage{mathbbol}
\usepackage{bbold}

\usepackage{tikz,tikz-cd}
\usetikzlibrary{matrix, positioning, decorations.pathreplacing, shapes,automata,arrows,calc}
\usepackage{caption}
\usepackage{subcaption}

\usepackage{amsthm}

\providecommand{\U}[1]{\protect\rule{.1in}{.1in}}
\newtheorem{theorem}{Theorem}[section]

\newtheorem{definition}[theorem]{Definition}

\newtheorem{lemma}[theorem]{Lemma}

\newtheorem{proposition}[theorem]{Proposition}
\newtheorem{remark}[theorem]{Remark}

\newcommand{\Z}{\mathbb{Z}}

\newcommand{\N}{\mathbb{N}}

\newcommand{\G}{\mathcal{G}}

\numberwithin{equation}{section}

\title{The RFD property for graph C*-algebras}
\author{Guillaume Bellier}

\begin{document}

\maketitle

\begin{abstract} It is proved that the C*-algebra of a graph is residually finite-dimensional (RFD) if and only if the graph has no infinite receiver, no cycle with an exit, no infinite backward chain and from each vertex, there is a finite path to a sink or a cycle or an infinite emitter. 
\end{abstract}

\section{Introduction}

A C*-algebra is residually finite-dimensional (RFD) if it has a separating family of finite-dimensional representations. 
RFD C*-algebras play central role in C*-theory and  appear at the heart of some long-standing problems. E.g.  Connes Embeddig Problem  
can be reformulated as a question of whether $C*(F_2 \times F_2)$ is RFD \cite{kirchberg1993non},  the UCT problem can be reduced to the case

This paper studies the RFD property for graph C*-algebras.  Graph C*-algebras appeared in 90-s (\cite{kumjian1997graphs}, \cite{KPR98}) as a generalization of Cuntz algebras and Cuntz-Krieger algebras and turn out to be a class of C*-algebras that is both large and tractable. It is often possible to characterize
C*-algebraic properties of $C^*(G)$ in terms of corresponding graph properties for $G$.

In \cite{bellier2025rfd} the author and T. Shulman characterized unital graph C*-algebras with the RFD-property:

a unital graph C*-algebra is RFD if and only if no cycle has an exit. 

We note that the technique used in \cite{bellier2025rfd} does not work  for non-unital graph C*-algebras. Thus to solve the problem for general graph C*-algebras one needs a completely different approach.  In this paper we use a groupoid model for graph C*-algebras (\cite{brownlowe2017graph}, \cite{webster2014path}) to give a complete characterization of when a graph C*-algebra is RFD.

\bigskip
 
 {\bf Theorem \;} {\it   Let $G$ be a graph. Then

    $C^*(G)$ is RFD if and only if $G$ has all of the following properties
    
    \begin{itemize}
        \item[a)] $G$ has no infinite receiver
        \item[b)] $G$ has no cycle with an exit
        \item[c)] $G$ has no infinite backward chain
        \item[d)] for each $v\in G^0$, there is a finite path from $v$ to a sink or a cycle or an infinite emitter.
    \end{itemize}}
 
 \bigskip

\medskip 

\noindent {\bf Acknowledgments. }
The author is grateful to Soren Eilers for useful information on groupoid model for graph $C^*$-algebras.
  The results of this article are part of the PhD project of the author.

\section{Preliminaries}

\subsection{Graph C*-algebras}

A directed graph $G = (G^0, G^1, s, r)$  consists of two countable sets $G^0$ and $G^1$ and functions $r, s: G^1\to G^0$. The elements
of $G^0$ are called {\it vertices} and the elements of $G^1$ are called {\it edges}. For each edge $e$, $s(e)$ is the {\it source} of $e$ and $r(e)$ is the {\it range } of $e$.

\medskip

The {\it graph C*-algebra} $C^*(G)$ associated with a graph $G$  is the universal $C^*$-algebra with generators $\{p_v, s_e\;|\; v\in G^0, e\in G^1\}$, where $\{p_v, v\in G^0\}$ is a collection of mutually orthogonal projections, $\{s_e, e\in G^1\}$ is
 a collection of partial isometries with mutually orthogonal ranges, and the following three
 relations are satisfied:

\medskip

\begin{enumerate}





\item $s_e^*s_e = p_{r(e)}$, for each  $e\in G^1$,

\medskip

\item $p_v = \sum_{e\in s^{-1}(v)} s_es_e^*$, for each $v\in G^0$ such that $0< \sharp(s^{-1}(v)) < \infty$,

\medskip

\item $s_e^*s_e \le p_{s(e)}$, for each  $e\in G^1$.

\end{enumerate}

\medskip

 The relations above are {\it Cuntz-Krieger relations} (CK-relations, for short). In the litterature, there are two notations for the graph $C^*$-algebras depending on the direction of the partial isometries associated with the edges. Tomforde calls classical convention when partial isometries and edges go on opposite directions and Raeburn convention when they go on the same direction. We follow the classical convention.

$C^*(G)$ is unital precisely when $G^0$ is finite. In this case $\sum_{v\in G^0}p_v$ is the unit of $C^*(G)$.

Throughout this paper we will use same notation  (usually, p) for a vertex and the projection associated with it, and we will use  same notation (usually, e) for an edge and the partial isometry associated with it.

\medskip

A {\it path} in $G$ is a sequence of edges $\nu = \nu_1 \ldots \nu_n$ such that $ r(\nu_i)= s(\nu_{i+1}) $ for each $1\le i\le n$. We write $|\nu| = n$ for the length of $\nu$. We regard vertices as paths of length $0$. We denote by $G^n$ the set of paths of length $n$ and write $G^* = \cup_{n\geq 0} G^n$ the set of all finite paths on $G$.

 We extend the range and source maps to $G^*$ by setting $s(\nu) = s(\nu_1)$ and $r(\nu) = r(\nu_{|\nu|})$ for $|\nu|>1$ and $r(v) = v= s(v)$ for $v\in G^0$. 
 


We say that a vertex $t$ is a {\it source} if it does not receive any edges, that is  $r^{-1}(t) = \emptyset$ and a {\it sink} if it does not emit any edges, that is  $s^{-1}(t) = \emptyset$.

A {\it cycle} $\mu=(\mu_n,\ldots,\mu_1)$ in $G$, is a path such that $n\geq 1$, $s(\mu) = r(\mu)$ and $s(\mu_i) \neq s(\mu_j)$ for $i\neq j$.  

We say that  {\it no cycle has an exit} meaning that no edge exits a cycle (besides the edges of the cycle itself, of course).

The $C^*$-algebra of two disconnected graphs is the direct sum of the $C^*$-algebras of each connected component. It is then easy two bring the result from connected graphs to general graphs. So to ease the notations, we always consider connected graphs in this article.


\begin{definition}
    We define for $v,w \in  G^0$, 
    $$ vG^*w = \{\mu \in G^*| s(\mu) = v, r(\mu) = w\}. $$
    We also define the set of singular vertices which are the sinks and the infinite emitters :
    $$ G^0_{sing} =\{v\in G^0||s^{-1}(v)|\in\{0,\infty\}\} $$ 
    We define an infinite path as an infinite sequence of edges $ e_0...e_ne_{n+1}...$ such that $r(e_{i})=s(e_{i+1}) $ for all $i$. By $G^{\infty}$, we call the set of all infinite paths in $G$. And finally, we define the boundary path space by 
    $$ \partial G = G^{\infty} \cup \{\mu \in G^*| r(\mu)\in G^0_{sing}\}.$$
\end{definition}


\begin{definition}
    For  $\mu\in G^*$ and $\nu\in \partial G$ such that $r(\mu) = s(\nu)$, we define the concatenation of paths $\mu\nu$ as the path of $\partial G$ that starts with $\mu$ and continues with $\nu$.
\end{definition}

We define now the topology on the boundary path space.

\begin{definition}
    For $\mu\in G^*$, the \textit{cylinder set} of $\mu$ is the set of all paths in $\partial G$ that starts with $\mu$ : 
    $$Z(\mu) = \{\mu\tilde{\nu}  \in \partial G| \tilde{\nu}\in r(\mu)\partial G\}$$
    with $r(\mu)\partial G = \{\tilde{\nu}\in \partial G| s(\tilde{\nu})=r(\mu)\}$ .

    For a finite subset $F\subseteq r(\mu)G^1 $, we define
    $$ Z(\mu\backslash F) = Z(\mu)\backslash\cup_{e\in F} Z(\mu e).$$
\end{definition}

\begin{proposition}[\cite{webster2014path}, Theorem 2.1]
\label{topo_G}
    The boundary path space $\partial G$ is a locally compact Hausdorff space with the topology given by the basis
    $$ \{Z(\mu \backslash F)|\mu\in G^*, F\subset r(\mu)G^1 \text{ finite}\}. $$
\end{proposition}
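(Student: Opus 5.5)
The plan is to check three things: that the sets $Z(\mu\backslash F)$ form a basis of a topology, that this topology is Hausdorff, and that each basic set is compact. Local compactness then follows, since every point lies in some basic compact open set. The cleanest route is to embed $\partial G$ into the product space $\{0,1\}^{G^*}$ via
$$\iota(x)=\bigl(\mathbb{1}[\mu \text{ is an initial segment of } x]\bigr)_{\mu\in G^*}.$$
Each cylinder $Z(\mu)$ is then the preimage of the clopen set $\{f: f(\mu)=1\}$. Consequently $Z(\mu\backslash F)$ is the preimage of a finite intersection of clopen cylinders, so the topology generated by these sets is the subspace topology from the product.

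The basis and Hausdorff properties come first.

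1. \emph{Intersections of basic sets.} Two cylinders $Z(\mu)$ and $Z(\nu)$ are either disjoint or nested: they are nested when one of $\mu,\nu$ extends the other, and disjoint otherwise. Hence
$$Z(\mu\backslash F)\cap Z(\nu\backslash F')$$
is either empty or of the form $Z(\nu\backslash F'')$ when $\nu$ extends $\mu$. In the second case, we either have $\nu=\mu$ and $F''=F\cup F'$; or $\nu$ strictly extends $\mu$ and $\nu$ avoids $F$ at its first step after $\mu$, in which case $F''=F'$; or the intersection is empty.
2. \emph{The sets cover $\partial G$.} Every point $x$ lies in $Z(s(x))$.
3. \emph{Hausdorff.} Injectivity of $\iota$ gives Hausdorffness directly. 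Concretely, take distinct $x\ne y$. If neither is an initial segment of the other, they differ at some edge and are separated by disjoint cylinders. If $x=\mu$ is finite and is a proper initial segment of $y=\mu e\cdots$, then $Z(\mu\backslash\{e\})$ and $Z(\mu e)$ separate them.

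The main step is compactness of $Z(\mu\backslash F)$; since this set is closed in $Z(\mu)$, it suffices to show that $Z(\mu)$ is compact. I would show that $\iota(Z(\mu))$ is closed in the compact space $\{0,1\}^{G^*}$. Take a net $\iota(x_\lambda)\to f$ with $f(\mu)=1$. Then $f$ is the indicator of a set $S$ of paths that is closed under initial segments and totally ordered by extension, because these are finite-coordinate conditions and so pass to limits. The set $S$ is therefore either an infinite path in $G^\infty$ or has a longest element $\nu$.

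In the second case the real work is to show that $r(\nu)$ is singular, and this is where the boundary condition enters. Suppose instead that $r(\nu)$ emits finitely many edges $e_1,\dots,e_k$ with $k\ge 1$. Then $\nu\notin\partial G$. Each $x_\lambda$ eventually begins with $\nu$, and since $x_\lambda$ lies in $\partial G$ it must continue past $\nu$. So for large $\lambda$, $x_\lambda$ lies in the finite union $\bigcup_i Z(\nu e_i)$. Passing to a subnet, some fixed $\nu e_i$ lies in $S$, contradicting maximality of $\nu$. Hence $\nu\in\partial G$ with $\iota(\nu)=f$, and in both cases $f\in\iota(Z(\mu))$.

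Finally, $\iota$ is a homeomorphism onto its image, because the basis was identified with the subspace topology in the first step. So $Z(\mu)$ is compact and the proposition follows.
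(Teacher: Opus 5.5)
The paper gives no proof of this proposition; it is quoted from Webster. There is therefore nothing in the paper to compare against, and your proposal stands as an independent, self-contained proof. It is correct in substance. The embedding $\iota$ reduces compactness of $Z(\mu)$ to closedness of $\iota(Z(\mu))$ in the Tychonoff cube $\{0,1\}^{G^*}$. The definition of $\partial G$ enters at exactly one point: ruling out a limit whose initial segments stop at a regular vertex. You handle that correctly, since the finitely many sets $Z(\nu e_i)$ eventually cover the net.

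One claim is stronger than what you actually prove. That each $Z(\mu\setminus F)$ is the preimage of a clopen set only shows that the basis topology $\tau$ is coarser than the initial topology $\tau_\iota$ induced by $\iota$. Equality would also require $\partial G\setminus Z(\nu)$ to be $\tau$-open. This is precisely where the sets with $F\neq\emptyset$ are indispensable. At a finite path $x$ with $r(x)$ an infinite emitter and $\nu=xe\cdots$, only a set like $Z(x\setminus\{e\})$ works; with the plain cylinders alone, the claim fails whenever $G$ has an infinite emitter. The needed argument is the same case analysis as your step 3, so it is easy to supply.

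You do not actually need it, though. Because $\iota$ is injective, $(Z(\mu),\tau_\iota)$ is homeomorphic to the compact set $\iota(Z(\mu))$, and compactness passes to the coarser topology $\tau$. By the same token, injectivity of $\iota$ only gives Hausdorffness for $\tau_\iota$. What proves it for $\tau$ is your explicit separation by basic sets, notably $Z(\mu\setminus\{e\})$ versus $Z(\mu e)$. You should rely on that argument rather than on the appeal to injectivity.
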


\begin{definition}
    For $n\in \N$, we define 
    $$ \partial G^{\geq n} = \{x\in\partial G| |x| \geq n\}.$$ 
\end{definition}

As a union of open subsets is open, we have the easy following proposition.

\begin{proposition}
     $\partial G^{\geq n} = \cup_{\mu\in G^n}Z(\mu)$ is an open subset of $\partial G$.
\end{proposition}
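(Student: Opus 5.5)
We need to prove two things: the set equality $\partial G^{\geq n} = \bigcup_{\mu\in G^n} Z(\mu)$, and then openness. Openness will follow from the basis given in Proposition \ref{topo_G}, so the set equality is the only real step.

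For the set equality we check both inclusions.

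First let $x\in \partial G$ with $|x|\geq n$, where $|x|=\infty$ if $x\in G^\infty$. Let $\mu$ be its first $n$ edges, $\mu = x_1\cdots x_n$; for $n=0$ take $\mu = s(x)$. Then $\mu\in G^n$, and $x=\mu\tilde\nu$, where $\tilde\nu$ is the remaining tail with $s(\tilde\nu)=r(\mu)$. The tail lies in $\partial G$:
\begin{itemize}
\item if $x$ is infinite, then $\tilde\nu$ is infinite;
\item if $x$ is finite, then $r(\tilde\nu)=r(x)\in G^0_{sing}$, and when $|x|=n$ the tail is the vertex $r(\mu)=r(x)$, which is singular.
\end{itemize}
Hence $x\in Z(\mu)$.

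Conversely, every element $\mu\tilde\nu$ of $Z(\mu)$ with $\mu\in G^n$ has length $n+|\tilde\nu|\geq n$, so it lies in $\partial G^{\geq n}$.

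For openness, each $Z(\mu)=Z(\mu\backslash\emptyset)$ is a basic open set by Proposition \ref{topo_G}, taking $F=\emptyset$. A union of open sets is open, so $\partial G^{\geq n}$ is open.

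The only point needing care is the degenerate case where $x$ has length exactly $n$ or $n=0$. There the tail is a vertex, which must be read as a path of length $0$ lying in $\partial G$ because it is singular.
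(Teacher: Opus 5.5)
Your proof is correct and follows the paper's approach: the paper simply observes that each $Z(\mu)=Z(\mu\backslash\emptyset)$ is a basic open set, and a union of open sets is open. You also explicitly verify the set equality $\partial G^{\geq n}=\bigcup_{\mu\in G^n}Z(\mu)$, including the degenerate cases $|x|=n$ and $n=0$, which the paper takes for granted.
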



\begin{definition}
    We define the shift map  $\sigma_G : \partial G^{\geq 1}\to\partial G$ given by  
    $$ \begin{array}{ll}
        \sigma(x_1x_2x_3...) &=  x_2x_3 ... \text{ for } x_1x_2x_3...\in \partial G^{\geq 2} \\
         \sigma(e)  &= r(e) \text{ for } e\in \partial G \cap G^1 .
    \end{array}$$
\end{definition}

\begin{remark}
    For $n\in \N$, the $n^{th}$ composition of $\sigma$ is $\sigma^n : \partial G^{\geq n} \to \partial G$.
    When we write $\sigma^n(x)$, we will assume $x\in \partial G^{\geq n} $.
\end{remark}

The following proposition is from \cite{webster2014path}, we give a proof here for the reader convenience.

\begin{proposition}
    For all $n\in \N$, $\sigma^n : \partial G^{\geq n} \to \partial G$ is a local homeomorphism.
\end{proposition}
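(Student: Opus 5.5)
The plan is to first reduce to $n=1$, and then to show that $\sigma$ restricted to each cylinder $Z(e)$, $e\in G^1$, is a homeomorphism onto the open set $Z(r(e))$.

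\textbf{Reduction to $n=1$.} For $n\geq 1$ we have $\sigma^n=\sigma\circ\sigma^{n-1}$ on $\partial G^{\geq n}$, and $\sigma^{n-1}$ maps $\partial G^{\geq n}$ into $\partial G^{\geq 1}$. Moreover $\partial G^{\geq n}$ is open by the preceding proposition. A composition of local homeomorphisms defined on open sets is again a local homeomorphism, so induction on $n$ gives the result. The case $n=0$ is trivial, since $\sigma^0$ is the identity.

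\textbf{The case $n=1$: bijectivity and openness.} Here $\partial G^{\geq 1}=\bigcup_{e\in G^1}Z(e)$, a union of open sets. So it suffices to prove, for each edge $e$, that $\sigma|_{Z(e)}:Z(e)\to Z(r(e))$ is a homeomorphism, where $Z(r(e))$ is the cylinder of the length-$0$ path $r(e)$, i.e.\ the set of boundary paths starting at $r(e)$.
\begin{itemize}
\item \emph{Bijectivity.} The map $\nu\mapsto e\nu$ from $r(e)\partial G$ to $Z(e)$ is inverse to $\sigma$, using $\sigma(e)=r(e)$ when $e$ itself is a boundary path.
\item \emph{Continuity and openness on basic sets.} Both reduce to the cylinder identities
$$\sigma\big(Z(e\mu\backslash F)\big)=Z(\mu\backslash F)\quad\text{for } s(\mu)=r(e),\ |\mu|\geq 0,$$
and
$$\sigma^{-1}\big(Z(\mu\backslash F)\big)\cap Z(e)=Z(e\mu\backslash F)\quad\text{when } s(\mu)=r(e)$$
(and empty otherwise). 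For $|\mu|\geq 1$ these are immediate from the definitions. The sets $Z(e\mu\backslash F)$ with $s(\mu)=r(e)$ form a basis for the subspace topology on $Z(e)$, because every basic set contained in $Z(e)$ has this form.
\end{itemize}

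\textbf{Main obstacle: $\mu$ of length $0$.} The delicate point is the case $\mu=v=r(e)$, where $Z(v\backslash F)$ with $F\subseteq vG^1$ must correspond to $Z(e\backslash F)$. Here I would check directly that a path $ex$ avoids every $Z(ef)$ with $f\in F$ exactly when $x$ avoids every $Z(f)$. The point $e$ itself lies in $Z(e)$ precisely when $r(e)$ is singular, and then it corresponds to the length-$0$ boundary path $r(e)$. So the identity holds, and $\sigma|_{Z(e)}$ is a homeomorphism onto the open set $Z(r(e))$. This proves $\sigma$ is a local homeomorphism, and the reduction above extends this to every $\sigma^n$.
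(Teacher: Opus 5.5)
Your proof is correct and follows essentially the paper's argument. The paper restricts $\sigma^n$ directly to a cylinder $Z(\mu)$ with $|\mu|=n$, shows it is a bijection onto $Z(r(\mu))$ with inverse $\nu\mapsto\mu\nu$, and checks that basic sets $Z(\nu\backslash F)$ correspond in both directions; you do this only for $n=1$ and get general $n$ by composing local homeomorphisms on the open sets $\partial G^{\geq n}$. Your explicit treatment of the length-zero case $Z(r(e)\backslash F)\leftrightarrow Z(e\backslash F)$ covers a point the paper passes over when it writes $Z(\sigma^n(\nu)\backslash F)$, so your version is, if anything, slightly more careful.
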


\begin{proof}
    Let $x$ be in $\partial G^{\geq n}$. Since $\partial G^{\geq n} = \cup_{\mu\in G^n}Z(\mu)$, there exists $\mu \in  G^n$ such that $x\in Z(\mu)$. 

    We have $Z(\mu) = \{\mu\tilde{\nu}  \in \partial G | \tilde{\nu}\in r(\mu)\partial G\}$ is open and $\sigma^n(Z(\mu))=Z(r(\mu))$ is also open.

    $$ \begin{array}{llll}
        \sigma^n|_{Z(\mu)} : & Z(\mu) &\to &Z(r(\mu))\\
         & \mu \nu &\mapsto &\nu
    \end{array}$$

    is a bijection.

    Let $Z(\nu\backslash F) $ be an open subset of $ Z(r(\mu))$.  Then $$ (\sigma^n|_{Z(\mu)})^{-1}(Z(\nu\backslash F))=Z(\mu\nu\backslash F)$$ is open.

    Let $Z(\nu\backslash F) $ be an open subset of $ Z(\mu)$.  Then $$ \sigma^n|_{Z(\mu)}(Z(\nu\backslash F))=Z(\sigma^n(\nu)\backslash F)$$ is open. (Note that $r(\nu)=r(\sigma^n(\nu))$ so $Z(\sigma^n(\nu)\backslash F)$ is well define).

    So $\sigma^n$ is a local homeomorphism.
\end{proof}

\subsection{Groupoids and Graphs}

\begin{definition}[\cite{brownlowe2017graph}]
    Let $G$ be a grpah. We define

    $$ \G = \{(x,m-n,y)| x,y\in \partial G, m,n \in \N, \sigma^m(x) = \sigma^n(y)\}$$
with product $(x,k,y)(w,l,z) = (x,k+l,z)$ if $y=w$ and undefined otherwise, and inverse given by $(x,k,y)^{-1} = (y,-k,x)$.

With these operations $ \G$ is a groupoid \cite[lemma 2.4]{kumjian1997graphs}.

The unit space $\G^0= \{(x,0,x)| x \in \partial G\}$ is identified with $\partial G$.

The range map and the source map $r,s : \G \to \partial G$ are given by $r(x,k,y) = x$ and $s(x,k,y) = y$.
\end{definition}

\begin{definition}

A topological groupoid $\G$ is \textit{étale} if the range map $r : \G\to \G$ is a local homeomorphism.
    
\end{definition}

\begin{definition}
    \label{topo}
    For $m,n\in \N$ and $U$ an open subset of $\partial G^{\geq m}$ such that the restriction of $\sigma^m$ to $U$ is injective, $V$ an open subset of $\partial G^{\geq n}$ such that the restriction of $\sigma^n$ to $V$ is injective, and that $\sigma^m(U) = \sigma^n(V)$, we define
    $$ Z(U,m,n,V) = \{(x,k,y)\in\G| x\in U, k=m-n,y\in V,\sigma^m(x) = \sigma^n(y)\} .$$
\end{definition}

\begin{proposition}[\cite{kumjian1997graphs}, Proposition 2.6]
    $\G$ is a locally compact, Hausdorff, étale topological groupoid with the topology generated by the basis consisting of the sets $Z(U,m,n,V)$. 
    
\end{proposition}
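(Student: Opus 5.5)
We have to show that $\G$ is a topological groupoid whose basic sets $Z(U,m,n,V)$ really form a basis, that the topology is Hausdorff and locally compact, and that $r$ is a local homeomorphism. The plan is to transfer everything from $\partial G$, using Proposition~\ref{topo_G} and the fact that each $\sigma^n$ is a local homeomorphism.

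\textbf{Step 1: the sets $Z(U,m,n,V)$ form a basis.} They cover $\G$. Given $(x,k,y)$ with $\sigma^m(x)=\sigma^n(y)$, write $x\in Z(\mu)$ with $|\mu|=m$ and $y\in Z(\nu)$ with $|\nu|=n$. Note that $r(\mu)=s(\sigma^m(x))=s(\sigma^n(y))=r(\nu)$. Then take $U=Z(\mu)$ and $V=Z(\nu)$, on which $\sigma^m$ and $\sigma^n$ are injective with common image $Z(r(\mu))$.

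For intersections, take $(x,k,y)\in Z(U_1,m_1,n_1,V_1)\cap Z(U_2,m_2,n_2,V_2)$, with $m_1-n_1=m_2-n_2=k$. Set $m=\max(m_1,m_2)$ and $n=m-k$. Then $\sigma^m(x)=\sigma^n(y)$, and $n\ge n_1,n_2$ since $n-n_i=m-m_i\ge 0$.

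Now shrink. Let $W$ be an open neighbourhood of $\sigma^m(x)$ contained in
$$\sigma^{m-m_1}\big(\sigma^{m_1}(U_1\cap U_2)\big)\cap\cdots,$$
and take $U=U_1\cap U_2\cap Z(x_1\cdots x_m)\cap(\sigma^m)^{-1}(W)$, with the analogous $V$. The delicate point is that we need $\sigma^m(U)=\sigma^n(V)$. We get this by taking $W$ open in $Z(r(x_m))$ and setting $U=(\sigma^m|_{Z(x_1\cdots x_m)})^{-1}(W)\cap U_1\cap U_2$, and similarly for $V$. Then we shrink $W$ to the intersection of the images, which are open by the local homeomorphism property.

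We must also check that the resulting set lies in both original sets. This holds because $\sigma^{m_i}(x')=\sigma^{n_i}(y')$ follows from $\sigma^m(x')=\sigma^n(y')$ together with the injectivity of $\sigma^{m-m_i}$ on the relevant cylinders. This is where a little care with lengths is needed.

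\textbf{Step 2: continuity of the operations and the étale property.} Inversion maps $Z(U,m,n,V)$ to $Z(V,n,m,U)$, so it is a homeomorphism. The range map restricted to $Z(U,m,n,V)$ is a bijection onto $U$, with inverse $x\mapsto (x,m-n,(\sigma^n|_V)^{-1}\sigma^m(x))$, which is continuous. The map also sends basic subsets to open sets, so $r$ is a local homeomorphism.

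For continuity of multiplication, take $(x,k,y)(y,l,z)$ and a basic neighbourhood $Z(U,m,n,V)$ of the product. Pick basic neighbourhoods of the two factors with matched middle exponents, again raising to a common large shift. By Step 1's refinement, their composable products land inside the given neighbourhood.

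\textbf{Step 3: Hausdorff and local compactness.} For Hausdorff, take distinct $(x,k,y)\ne(x',k',y')$. If $x\ne x'$ or $y\ne y'$, separate them using $r$ or $s$ and the Hausdorff property of $\partial G$. If $x=x'$ and $y=y'$ but $k\ne k'$, note that basic sets with different lags $k$ are disjoint.

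For local compactness, each point has a neighbourhood $Z(U,m,n,V)$ homeomorphic via $r$ to $U$. By Proposition~\ref{topo_G}, we may take $U$ to be a compact open set such as $Z(\mu\backslash F)$; these are compact, as in \cite{webster2014path}.

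The main obstacle is the intersection and refinement argument in Step 1: we must make sure the refined $U$ and $V$ have equal images and that the relation $\sigma^{m_i}(x)=\sigma^{n_i}(y)$ is inherited. Everything else is routine.
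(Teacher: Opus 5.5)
The paper gives no proof of this statement; it is imported from \cite{kumjian1997graphs}. Your verification is correct, and it takes a different route from the classical one.

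\textbf{Comparison with the classical argument.} In the setting of the cited paper, the graph is locally finite and the unit space is the infinite path space. The classical argument there works with the cylinder sets $Z(\mu,\nu)$. Two of them with the same lag are either disjoint or nested, since $Z(\mu\alpha,\nu\alpha)\subseteq Z(\mu,\nu)$, so the basis property is immediate. Compactness and the étale property then come from $r$ mapping $Z(\mu,\nu)$ homeomorphically onto the compact set $Z(\mu)$.

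You instead treat $\G$ as the Deaconu--Renault groupoid of the local homeomorphisms $\sigma^n$ on the open sets $\partial G^{\geq n}$. You refine an intersection by passing to $m=\max(m_1,m_2)$ and $n=m-k$. You then recover $\sigma^{m_i}(x')=\sigma^{n_i}(y')$ because both sides lie in a single cylinder $Z(\alpha)$ on which $\sigma^{m-m_i}$ is injective; here $\alpha$ is the common prefix of length $m-m_i$ of $\sigma^{m_i}(x)=\sigma^{n_i}(y)$.

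This costs more bookkeeping, but it is what this paper's generality needs. At an infinite emitter $v$, the only cylinder containing the length-zero path $v$ is $Z(v)$, which is not contained in the neighbourhood $Z(v\setminus F)$. So the sets $Z(\mu,\nu)$ alone no longer form a basis. Your argument uses only that each $\sigma^n$ is a local homeomorphism, so it covers arbitrary graphs directly.

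\textbf{Details to tighten.}
\begin{itemize}
\item You correctly use $r(Z(U',m',n',V'))=U'$ to see that $r$ is open. This is what makes $x\mapsto(x,m-n,(\sigma^n|_V)^{-1}\sigma^m(x))$ continuous into $\G$. Continuity of its coordinates alone would not suffice: the topology of $\G$ can be strictly finer than the relative product topology. For instance, the unit space is open in $\G$ but in general not relatively open in $\partial G\times\mathbb{Z}\times\partial G$.
\item You also need continuity of $r$ itself, which you use again in the Hausdorff step. It follows from
$r^{-1}(O)\cap Z(U,m,n,V)=Z\bigl(U\cap O,m,n,(\sigma^n|_V)^{-1}\sigma^m(U\cap O)\bigr)$.
\item For local compactness, the simplest choice is $U=Z(\mu)$ and $V=Z(\nu)$ with $\mu,\nu$ the prefixes of $x,y$ of lengths $m,n$. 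This gives the compact open neighbourhood $Z(\mu,\nu)$ and avoids having to shrink $U$ and $V$ compatibly.
\end{itemize}
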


\begin{definition}
    For $\mu,\nu\in G^*$ with $r(\mu)=r(\nu)$, we define 
    $$ Z(\mu,\nu) = Z(Z(\mu),|\mu|,|\nu|,Z(\nu)). $$
\end{definition}

\begin{proposition}[\cite{brownlowe2017graph}]
    For $\mu,\nu\in G^*$ with $r(\mu)=r(\nu)$, $Z(\mu,\nu)$ is compact and open and the subspace topology on $\partial G$ coming from the topology on $\G$ agrees with the topology on $\partial G$ given in Proposition \ref{topo_G}.
\end{proposition}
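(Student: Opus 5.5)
We need to prove: for $\mu,\nu\in G^*$ with $r(\mu)=r(\nu)$, the set $Z(\mu,\nu)$ is compact and open in $\G$, and the topology that $\partial G$ (identified with $\G^0$) inherits from $\G$ agrees with the topology of Proposition \ref{topo_G}.

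\textbf{Openness.} First I check that $Z(\mu,\nu)$ is a basic set, i.e. that $(Z(\mu),|\mu|,|\nu|,Z(\nu))$ satisfies the hypotheses of Definition \ref{topo}.
\begin{itemize}
\item $Z(\mu)$ is open in $\partial G$ (it is $Z(\mu\backslash\emptyset)$) and is contained in $\partial G^{\geq|\mu|}$.
\item By the proof that $\sigma^n$ is a local homeomorphism, $\sigma^{|\mu|}|_{Z(\mu)}:\mu\tilde\nu\mapsto\tilde\nu$ is a bijection onto $Z(r(\mu))$, so it is injective.
\item The same holds for $\nu$, and $\sigma^{|\mu|}(Z(\mu))=Z(r(\mu))=Z(r(\nu))=\sigma^{|\nu|}(Z(\nu))$.
\end{itemize}
Hence $Z(\mu,\nu)$ is an element of the basis and is open.

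\textbf{Compactness.} The plan is to show that the map
$$\phi: Z(r(\mu))\to Z(\mu,\nu),\qquad x\mapsto(\mu x,|\mu|-|\nu|,\nu x)$$
is a continuous surjection from a compact space.

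1. $Z(r(\mu))$ is compact. I would take this from \cite{webster2014path}, where cylinder sets are shown to be compact. Alternatively, embed $\partial G$ into $\{0,1\}^{G^*}$ via $x\mapsto(\mathbb 1_{\lambda\text{ prefix of }x})_\lambda$. The basic sets $Z(\lambda\backslash F)$ pull back exactly the cylinder sets of the product topology, so this is a homeomorphism onto its image. One then checks that the image of $Z(r(\mu))$ is closed. The key point is that a limit of boundary paths whose prefix set is a finite path $\lambda$ with $r(\lambda)$ a regular vertex cannot occur: $r(\lambda)$ emits only finitely many edges, so $Z(\lambda)=\bigcup_{e\in r(\lambda)G^1} Z(\lambda e)$ is a finite union and some extension persists in the limit. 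Closedness together with Tychonoff gives compactness.

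2. $\phi$ is surjective. If $(x',k,y')\in Z(\mu,\nu)$, then $x'=\mu x$ and $y'=\nu y$ with $\sigma^{|\mu|}(x')=\sigma^{|\nu|}(y')$, so $x=y$.

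3. $\phi$ is continuous. Take a basic open set $Z(U,m,n,V)$ and a point $\phi(x)$ in it. I would shrink to a neighbourhood of the form $Z(\mu\alpha\backslash F)$ inside $U$, where $\alpha$ is a prefix of $x$, and use injectivity of $\sigma^m$ on $U$ and $\sigma^n$ on $V$ to see that $\phi(Z(\alpha\backslash F))\subseteq Z(U,m,n,V)$ after possibly shrinking further. This is the routine but fiddly step, since the lengths $m,n$ need not match $|\mu|,|\nu|$.

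Since $\G$ is Hausdorff, $\phi$ is in fact a homeomorphism onto $Z(\mu,\nu)$.

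\textbf{Agreement of topologies on $\partial G$.} Consider the identification $x\mapsto(x,0,x)$.
\begin{itemize}
\item This map is continuous, by the special case $\mu=\nu$ of step 3 applied locally.
\item It is open onto $\G^0$: the image of $Z(\lambda\backslash F)$ is $Z(Z(\lambda\backslash F),|\lambda|,|\lambda|,Z(\lambda\backslash F))\cap\G^0$, which is open.
\end{itemize}

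The main obstacle I expect is the compactness of $Z(r(\mu))$ near infinite emitters and sinks: the basis uses only finite $F$, and that is exactly what makes the closedness argument work. I would rely on \cite{webster2014path} here if possible.
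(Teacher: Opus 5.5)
The paper gives no proof of this proposition. It is quoted from \cite{brownlowe2017graph}, which in turn relies on \cite{kumjian1997graphs} for the basis $Z(U,m,n,V)$ and on \cite{webster2014path} for compactness of cylinder sets. Your proposal is correct and supplies the standard self-contained argument behind that citation. $Z(\mu,\nu)$ is a basic open set, and it is the continuous image of the compact set $Z(r(\mu))$ under $x\mapsto(\mu x,|\mu|-|\nu|,\nu x)$. The case $\mu=\nu=v$ then gives the identification of $\partial G$ with $\G^0$. The citation buys brevity. Your route makes explicit the homeomorphism $Z(\mu,\nu)\cong Z(r(\mu))$, i.e.\ that $Z(\mu,\nu)$ is a compact open bisection, which is what puts $1_{Z(v,v)}$ and $1_{Z(e,s(e))}$ of Proposition \ref{Iso} in $C_c(\G)$.

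\textbf{Step 3 (continuity).} This is easier than you expect, and injectivity of $\sigma^m|_U$, $\sigma^n|_V$ plays no role. If $\phi(x)\in Z(U,m,n,V)$, then $m-n=|\mu|-|\nu|$. The identity $\sigma^m(\mu x')=\sigma^n(\nu x')$ then holds automatically for every $x'$ with $\mu x'\in U$ and $\nu x'\in V$:
\begin{itemize}
\item if $m\ge|\mu|$, both sides equal $\sigma^{m-|\mu|}(x')$;
\item if $m<|\mu|$, they are $\mu''x'$ and $\nu''x'$, where $\mu'',\nu''$ are the tails of $\mu,\nu$ of length $|\mu|-m$, and $\mu''x=\nu''x$ forces $\mu''=\nu''$.
\end{itemize}
So a nonempty $\phi^{-1}(Z(U,m,n,V))$ is just $\{x'\in Z(r(\mu)):\mu x'\in U,\ \nu x'\in V\}$. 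This set is open because $x'\mapsto\mu x'$ is the inverse of the homeomorphism $\sigma^{|\mu|}|_{Z(\mu)}$ from the local-homeomorphism proposition.

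\textbf{Step 1 (the embedding).} The correspondence with product cylinders is not literally exact. The preimage of $\{f:f(\lambda)=0\}$ is $\partial G\setminus Z(\lambda)$, and you must check that this set is open. The only non-obvious points are proper prefixes $x$ of $\lambda$ with $r(x)$ an infinite emitter. Such an $x$ is separated from $Z(\lambda)$ by $Z(x\backslash\{e\})$, where $e$ is the next edge of $\lambda$. This is where finiteness of $F$ actually matters, since it makes the two topologies coincide. Your closedness argument uses a different fact: regular vertices emit only finitely many edges.
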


\begin{proposition}[\cite{yeend2007groupoid} Proposition 6.2]
    $\G$ is topologically amenable with this topology.
\end{proposition}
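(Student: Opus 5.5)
The plan is to deduce amenability from the standard ``cocycle plus amenable kernel'' criterion rather than build approximate invariant means by hand. Consider the map $c:\G\to\Z$, $c(x,k,y)=k$. It is well defined because the integer $k$ is part of the element $(x,k,y)$. It is multiplicative for the groupoid law, since $(x,k,y)(y,l,z)=(x,k+l,z)$. It is locally constant, hence continuous, because on each basic open set $Z(U,m,n,V)$ of Definition \ref{topo} it takes the constant value $m-n$. So $c$ is a continuous cocycle into the amenable group $\Z$. The result of Renault (see also Spielberg) says that a locally compact Hausdorff étale groupoid carrying a continuous cocycle into an amenable group is topologically amenable as soon as the kernel $c^{-1}(0)$ is topologically amenable. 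The proof therefore reduces to showing that $\mathcal{H}:=c^{-1}(0)$ is amenable.

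For $\mathcal{H}$, I would show that it is an increasing union of open subgroupoids, each of which is proper (equivalently, each is an equivalence relation with compact-open type structure). For $n\in\N$ set
$$\mathcal{H}_n=\{(x,0,y)\in\G \mid x=y \text{ or } \big(x,y\in\partial G^{\geq n},\ \sigma^n(x)=\sigma^n(y)\big)\}.$$
The steps, in order, are as follows.

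First, I check that $\mathcal{H}_n$ is a subgroupoid. Closure under products follows because $\sigma^n(x)=\sigma^n(y)$ and $\sigma^n(y)=\sigma^n(z)$ give $\sigma^n(x)=\sigma^n(z)$. Next, $\mathcal{H}_n\subseteq\mathcal{H}_{n+1}$. Also $\bigcup_n\mathcal{H}_n=\mathcal{H}$: if $\sigma^m(x)=\sigma^m(y)$, then $x$ and $y$ have equal length beyond $m$, so both lie in $\partial G^{\geq m}$.

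Second, $\mathcal{H}_n$ is open. Away from the diagonal it is the union of the basic sets $Z(\mu,\nu)$ with $|\mu|=|\nu|=n$. The diagonal $\G^0$ is open because $\G$ is étale.

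Third, each $\mathcal{H}_n$ is amenable. It is a principal groupoid, namely the equivalence relation ``$x=y$, or $x$ and $y$ agree after the first $n$ edges''. Its orbits are countable, and it is covered by the compact open bisections $Z(\mu,\nu)$ together with the unit space. Restricted to each $Z(r(\mu))$-fibre, it is the orbit equivalence relation of a countable discrete family of partial homeomorphisms $\mu\tilde\nu\mapsto\nu\tilde\nu$ with finite-to-one behaviour over compact sets. From this I would show that $\mathcal{H}_n$ is proper, i.e. $(r,s):\mathcal{H}_n\to\partial G\times\partial G$ is proper. Proper groupoids are topologically amenable.

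Finally, an increasing union of open amenable subgroupoids with the same unit space is amenable, because approximate invariant means on the pieces assemble directly. Hence $\mathcal{H}$ is amenable, and so is $\G$.

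The main obstacle I expect is the properness in the third step, and in particular the interaction with infinite emitters. When $r(\mu)$ is an infinite emitter, infinitely many $\nu$ with $|\nu|=n$ may share the same range. The preimage of a compact set under $(r,s)$ then might not be covered by finitely many $Z(\mu,\nu)$ (an infinite receiver also contributes infinitely many $\mu$ ending at a vertex). If properness fails, I would instead exhibit amenability of $\mathcal{H}_n$ directly. I would use that $\mathcal{H}_n$ is the inductive limit over finite sets $E$ of edge-paths of the compact-open subgroupoids generated by $\{Z(\mu,\nu):\mu,\nu\in E\}$. Each of these is a compact groupoid with finite orbits, hence amenable (indeed an elementary, AF-type building block), and the increasing-union argument applies again. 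This is the route taken in \cite{yeend2007groupoid}, and I would follow it if the direct properness check breaks down.
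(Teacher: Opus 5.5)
The paper gives no argument for this statement; it imports it from Yeend. Your overall strategy is the standard one and is sound: the degree cocycle $c:\G\to\Z$, the Renault/Spielberg criterion reducing to amenability of $\mathcal{H}=c^{-1}(0)$, and an exhaustion of $\mathcal{H}$ by open amenable subgroupoids containing $\G^0$. Two of your steps fail as written, however.

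First, your $\mathcal{H}_n$ are not nested once $G$ has finite boundary paths. If $e\neq f$ are edges into a sink, then $(e,0,f)\in\mathcal{H}_1$ but $(e,0,f)\notin\mathcal{H}_2$, because $e,f\notin\partial G^{\geq 2}$. So your final ``increasing union'' lemma does not apply to $(\mathcal{H}_n)$; you would need $\{(x,0,y):\sigma^m(x)=\sigma^m(y)\text{ for some } m\le n\}$ instead.

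Second, and more seriously, $\mathcal{H}_n$ is not proper when $G$ has an infinite emitter, as you suspected, so your main route to amenability of the pieces fails. Suppose $w$ emits edges $f_1,f_2,\dots$, all with range $u$, and take $z\in u\partial G$. The distinct points $f_iz$ all lie in one $\mathcal{H}_1$-orbit and in the compact set $Z(w)$. In a proper étale groupoid, $(r,s)^{-1}(K\times\{x\})$ is compact and contained in the discrete fibre $s^{-1}(x)$, hence finite. So a compact set meets each orbit in only finitely many points, which $Z(w)$ does not.

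Your fallback is the right repair, but two things need fixing. The generated subgroupoids have unit space $\bigcup_{\mu\in E}Z(\mu)\neq\partial G$, so your union lemma does not apply to them directly. Also, restricting to paths of one length $n$ brings back the nesting problem.

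A clean version bypasses $\mathcal{H}_n$ altogether. For $|\mu|=|\nu|$ and $|\alpha|=|\beta|$, one checks that $Z(\mu,\nu)Z(\alpha,\beta)$ equals $Z(\mu\alpha',\beta)$ if $\alpha=\nu\alpha'$, equals $Z(\mu,\beta\nu')$ if $\nu=\alpha\nu'$, and is empty otherwise. Now take a finite set $E$ of pairs $(\mu,\nu)$ with $|\mu|=|\nu|$ and $r(\mu)=r(\nu)$. The subgroupoid $\mathcal{K}_E$ generated by the corresponding $Z(\mu,\nu)$ is then a finite union of sets $Z(\gamma,\delta)$, where $\gamma,\delta$ use only the finitely many edges occurring in $E$ and have length at most $\max|\mu|$.

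So $\mathcal{K}_E$ is compact open with clopen unit space. Then $\mathcal{K}_E\cup\G^0$ is the disjoint union of a compact groupoid (hence proper, hence amenable) and a trivial one. These are open subgroupoids containing $\G^0$ and increasing in $E$. Since $G^*$ is countable, you can take $E_1\subseteq E_2\subseteq\cdots$ with $\bigcup_k(\mathcal{K}_{E_k}\cup\G^0)=c^{-1}(0)$. Your union lemma and the cocycle criterion then finish the proof.
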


So the reduced and universal $C^*$-algebras of $\G$ are equal, and we denote $C^*(\G) $ this $C^*$-algebra.

\begin{proposition}[\cite{brownlowe2017graph} Proposition 2.2]
\label{Iso}
    Let $G$ be a graph. Then there is a unique isomorphism $\pi : C^*(G) \to C^*(\G)$ such that $\pi(p_v) = 1_{Z(v,v)}$ for all $v\in G^0$ and $\pi(s_e)=1_{Z(e,s(e))}$ for all $e\in G^1$.
\end{proposition}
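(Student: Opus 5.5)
We construct $\pi$ from the universal property of $C^*(G)$, prove injectivity with the gauge-invariant uniqueness theorem, and prove surjectivity by showing that the image contains the indicator functions of a basis of compact open bisections. Uniqueness is automatic, because $\pi$ is prescribed on a generating set.

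\emph{Step 1: the Cuntz--Krieger family.} For a compact open bisection $B\subseteq\G$, the indicator $1_B$ lies in $C_c(\G)$, and convolution gives $1_B*1_C=1_{BC}$ and $1_B^*=1_{B^{-1}}$. The subset $Z(v,v)$ is the piece of the unit space $Z(v)\subseteq\partial G$, so $1_{Z(v,v)}$ is a projection. Distinct vertices give disjoint cylinders, so these projections are mutually orthogonal.

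For an edge $e$, the bisection attached to $e$ consists of the triples $(ex,1,x)$ with $x\in Z(r(e))$. I will read the notation $Z(e,s(e))$ in the statement as this set; it is the set $Z(Z(e),1,0,Z(r(e)))$. Its inverse times itself is the unit set $Z(r(e))$, which gives relation (1). Its product with its inverse is the unit set $Z(e)$, which lies inside $Z(s(e))$; this gives relation (3). Different edges have disjoint cylinders $Z(e)$, so the ranges are mutually orthogonal.

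For relation (2), let $v$ satisfy $0<|s^{-1}(v)|<\infty$. Then $v\notin G^0_{sing}$, so the length-zero path $v$ is not a boundary path. Hence every element of $Z(v)$ begins with some edge $e\in s^{-1}(v)$, and $Z(v)=\bigsqcup_{e\in s^{-1}(v)}Z(e)$. This is exactly $p_v=\sum_{e\in s^{-1}(v)} s_es_e^*$. The universal property now gives a $*$-homomorphism $\pi:C^*(G)\to C^*(\G)$ with the prescribed values.

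\emph{Step 2: injectivity.} The map $c(x,k,y)=k$ is a continuous cocycle $\G\to\Z$. It induces a circle action $\beta$ on $C^*(\G)$, given on $C_c(\G)$ by $\beta_z(f)(g)=z^{c(g)}f(g)$; the action extends because $C^*(\G)$ is the universal (equivalently, by amenability, the reduced) completion. From the definitions, $\beta_z\circ\pi=\pi\circ\gamma_z$ on the generators, where $\gamma$ is the gauge action on $C^*(G)$. Moreover $\pi(p_v)=1_{Z(v,v)}\neq0$ for every $v$, since $Z(v)\neq\emptyset$: from $v$ one can always follow edges until reaching a singular vertex, or else forever. The gauge-invariant uniqueness theorem of an Huef--Raeburn / Bates--Hong--Raeburn--Szymański therefore gives injectivity. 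I expect this citation, rather than a direct computation, to be the cleanest route.

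\emph{Step 3: surjectivity.} This is the main obstacle. Set $s_\mu=s_{\mu_1}\cdots s_{\mu_n}$ for a finite path $\mu$. Step 1 gives $\pi(s_\mu s_\nu^*)=1_{Z(\mu,\nu)}$, and subtracting finitely many terms gives
\[
\pi\Bigl(s_\mu\bigl(p_{r(\mu)}-\textstyle\sum_{e\in F}s_es_e^*\bigr)s_\nu^*\Bigr)=1_{Z(\mu\backslash F,\nu\backslash F)} .
\]
I would then show two things. First, these sets form a basis of compact open bisections for the topology generated by the sets $Z(U,m,n,V)$, using the cylinder basis of Proposition \ref{topo_G} for $U$ and $V$. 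Second, any $f\in C_c(\G)$ is a uniform limit of finite linear combinations of indicators of such sets, with supports inside a fixed compact set. For the second point, cover $\operatorname{supp} f$ by finitely many basic bisections, refine them to a disjoint family, and apply Stone--Weierstrass on each piece; the zero-dimensionality of $\partial G$ is what makes this work. Convergence in the inductive-limit topology then implies norm convergence. So the image of $\pi$ is dense, and since the image of a $*$-homomorphism between C*-algebras is closed, $\pi$ is onto.
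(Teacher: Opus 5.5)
Your proof is correct. The paper gives no argument of its own for this proposition; it quotes it from Brownlowe--Carlsen--Whittaker. What you wrote is the standard proof of that cited result:
\begin{itemize}
\item the indicators $1_{Z(v,v)}$ and $1_{Z(e,r(e))}$ form a Cuntz--Krieger family, so the universal property gives $\pi$;
\item the cocycle $c(x,k,y)=k$ yields a gauge action, so the gauge-invariant uniqueness theorem gives injectivity (in the Bates--Hong--Raeburn--Szyma\'nski form, which is the one needed since $G$ may have infinite emitters);
\item surjectivity follows because the sets $Z(\mu\backslash F,\nu\backslash F)$ form a basis of compact open bisections whose indicators lie in the image.
\end{itemize}

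You were right to read $Z(e,s(e))$ as $Z(e,r(e))=Z(Z(e),1,0,Z(r(e)))$. Under the paper's classical convention, $Z(\mu,\nu)$ requires $r(\mu)=r(\nu)$, so the literal $Z(e,s(e))$ is defined only when $e$ is a loop. The notation is a leftover from the source's opposite convention. Likewise, relation (3) is meant as $s_es_e^*\le p_{s(e)}$, and that is the relation you actually verified.

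Two small points deserve a line each if you write this out in full.
\begin{itemize}
\item The uniqueness theorem requires $\beta$ to be strongly continuous. This holds because $c$ takes only finitely many values on a compact set. Hence for $f\in C_c(\mathcal{G})$ the map $z\mapsto\beta_z(f)=\sum_k z^k f\cdot 1_{c^{-1}(k)}$ is a finite trigonometric polynomial, and continuity extends to $C^*(\mathcal{G})$ by density.
\item The indicator of every compact open set must lie in the span of the basic indicators; this covers, for example, the clopen pieces produced by your refinement and Stone--Weierstrass step. This holds because $Z(\alpha\backslash F,\beta\backslash F)$ and $Z(\alpha'\backslash F',\beta'\backslash F')$ are either disjoint or meet in a set of the same form (if they meet and $|\alpha|\le|\alpha'|$, then $\alpha'=\alpha\lambda$ and $\beta'=\beta\lambda$). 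So inclusion--exclusion over a finite basic cover applies.
\end{itemize}
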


\begin{definition}
    
 Let $G$ be graph. An \textit{infinite backward chain} is  an infinite sequence of distinct edges $ ...e_{-1}e_{0}e_{1}...$   such that $r(e_{i})=s(e_{i+1})$ for all $i$ and each edge has a predecessor (i.e. for all $e_i$ there exits and edge $e_{i-1}$ in the infinite backward chain such that $r(e_{i-1})=s(e_{i})$).

 It can be on the form

 \begin{center}
    
 \begin{tikzpicture}[node distance={15mm}, thick, main/.style = {draw, fill, circle, inner sep=1.5pt},mainLarge/.style = {draw, circle, minimum size=30mm}]

 \def\Unit{1}

\node (P1) at (0*\Unit,0*\Unit){};
\node[main, label=below:$ $,black] (P2) at (1*\Unit,0*\Unit){};
\node[main, label=below:$ $,black] (P3) at (2*\Unit,0*\Unit){};
\node[main, label=below:$ $,black] (P4) at (3*\Unit,0*\Unit){};
\node[main, label=below:$ $,black] (P5) at (4*\Unit,0*\Unit){};
\node (P6) at (5*\Unit,0*\Unit){};

\path[-angle 90,font=\scriptsize]
(P1) edge [dotted,"",black]   (P2)

(P2) edge ["$e_{-1}$",black]   (P3)
(P3) edge ["$e_{0}$",black]   (P4)
(P4) edge ["$e_{1}$",black]   (P5)

(P5) edge [dotted,"",black]   (P6)
;

\end{tikzpicture}
 
\end{center}

or

\begin{center}
    
 \begin{tikzpicture}[node distance={15mm}, thick, main/.style = {draw, fill, circle, inner sep=1.5pt},mainLarge/.style = {draw, circle, minimum size=30mm}]

 \def\Unit{1}

\node (P1) at (0*\Unit,0*\Unit){};
\node[main, label=below:$ $,black] (P2) at (1*\Unit,0*\Unit){};
\node[main, label=below:$ $,black] (P3) at (2*\Unit,0*\Unit){};
\node[main, label=below:$ $,black] (P4) at (3*\Unit,0*\Unit){};

\path[-angle 90,font=\scriptsize]
(P1) edge [dotted,"",black]   (P2)

(P2) edge ["$e_{-1}$",black]   (P3)
(P3) edge ["$e_{0}$",black]   (P4)

;

\end{tikzpicture}
 
\end{center}

 
    
    


    





 

\end{definition}


    

\begin{definition}

 Let $\G$ be a groupoid with locally compact unit space $\G^{0} $. For $x,y \in \G^{0}$ it is $\G_x^y=\{\gamma \in \G| s(\gamma) = x, r(\gamma) = y \}$,  and call $\G_x^x$, the isotropy subgroup of $\G$ at $x$.
\end{definition}

\begin{definition}

We define the following relation on $\G^{0}$: for $x,y \in \G^{0}$ we have $x\sim y$ if $\G_x^y\neq 0$. The relation $\sim$ is an equivalence relation.
The equivalence classes are called orbits of $\G$ (inside $\G^{0})$. The class of the element $x\in \G^{0}$ is denoted $[x]$ and the cardinal of this class is denoted $|[x]|$.
A point $x\in \G^{0}$ is said to be $periodic$ if its orbit is finite.
\end{definition}

\begin{definition}

 A group is called maximally almost periodic (MAP) if it has a separating family of finite-dimensional representations.
\end{definition}

\begin{theorem}[\cite{shulman2023rfd}]
\label{ThmRFD}
    Let $\G$ be an amenable étale groupoid with locally compact unit space $X$. If $C^*(\G)$ is $RFD$ then $X$ admits a dense set of periodic points; and if $X$ admits a dense set of periodic points with $MAP$ isotropy subgroups, then $C^*(\G)$ is $RFD$.
\end{theorem}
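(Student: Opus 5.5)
The statement is the cited result of \cite{shulman2023rfd}, and I would prove its two implications separately. Throughout I identify $C_0(X)$ with a subalgebra of $C^*(\G)$, and for an open bisection $S$ and $h\in C_c(S)$ I use the elementary identity
$$h^*gh=|h\circ (s|_S)^{-1}|^2\,(g\circ r\circ (s|_S)^{-1}) \quad\text{on } s(S),$$
valid for every $g\in C_0(X)$.

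\emph{Necessity.} Assume $C^*(\G)$ is RFD, and suppose some nonempty open $U\subseteq X$ contains no periodic point.
\begin{enumerate}
\item Pick $0\neq f\in C_c(U)$. By the RFD property there is an irreducible finite-dimensional representation $\pi$ with $\pi(f)\neq 0$.
\item The image $\pi(C_0(X))$ is a finite-dimensional commutative algebra. Hence $\pi|_{C_0(X)}$ is a finite sum of point evaluations at the points of some finite set $F\subseteq X$. In particular $\pi(g)=0$ whenever $g|_F=0$, and $\pi(k)\neq 0$ whenever $k(x)\neq 0$ for some $x\in F$.
\item I show that $F$ is $\G$-invariant. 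Take $x\in F$ and an arrow $\gamma$ with $s(\gamma)=x$ and $r(\gamma)=y$. Choose an open bisection $S\ni\gamma$ and $h\in C_c(S)$ with $h(\gamma)\neq 0$.
\item Suppose $y\notin F$. Choose $g\in C_0(X)$ with $g|_F=0$ and $g(y)=1$. By the identity above, $k=h^*gh\in C_0(X)$ satisfies $k(x)\neq0$, so $\pi(k)\neq 0$ by step 2. But also $\pi(k)=\pi(h)^*\pi(g)\pi(h)=0$, a contradiction. Hence $y\in F$.
\item So $F$ is a finite union of orbits, and every point of $F$ is periodic. Since $\pi(f)\neq 0$ and $f$ is supported in $U$, some point of $F$ lies in $U$, contradicting the choice of $U$.
\end{enumerate}
Therefore the periodic points are dense.

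\emph{Sufficiency.} Let $x$ be periodic with MAP isotropy $H=\G_x^x$, and put $n=|[x]|$.
\begin{enumerate}
\item The orbit $[x]$ is finite, hence closed and invariant. So restriction gives a $*$-homomorphism $\rho_x: C^*(\G)\to C^*(\G|_{[x]})$.
\item $\G|_{[x]}$ is a transitive groupoid on $n$ points, so $C^*(\G|_{[x]})\cong M_n(C^*(H))$. Fix arrows from $x$ to each point of the orbit; these give the matrix units.
\item Amenability of $\G$ passes to $H$. An amenable MAP group has $C^*(H)=C^*_r(H)$ RFD; this is Bekka's theorem, the key external input. Consequently $M_n(C^*(H))$ is RFD.
\item It remains to show $\bigcap_{x \text{ periodic}}\ker\rho_x=0$. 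Since $\G$ is amenable, $\|a\|=\sup_{z\in X}\|\lambda_z(a)\|$, where $\lambda_z$ is the regular representation on $\ell^2(\G_z)$.
\item For fixed $a$, the map $z\mapsto\|\lambda_z(a)\|$ is lower semicontinuous, which is standard for étale groupoids. So if $a\neq0$, the set $\{z:\lambda_z(a)\neq 0\}$ is nonempty and open, and by density it contains a periodic point $x$.
\item Finally, $\lambda_x$ factors through $\rho_x$: it is the regular representation of $\G|_{[x]}$. Hence $\rho_x(a)\neq0$.
\item Composing $\rho_x$ with the separating finite-dimensional representations of $M_n(C^*(H))$ gives a separating family for $C^*(\G)$.
\end{enumerate}

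The main obstacle is step 3 of sufficiency. The MAP hypothesis alone only separates group elements, not elements of $C^*(H)$, so amenability is essential there. A secondary technical point is the lower semicontinuity in step 5, which I would prove by approximating $a$ by elements of $C_c(\G)$ supported on finitely many bisections.
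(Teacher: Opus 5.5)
The paper does not prove this statement. It quotes it from \cite{shulman2023rfd} and uses it as a black box, so there is no in-paper argument to compare yours with.

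Your reconstruction is correct. The necessity half is self-contained and does not use amenability. Restricting a finite-dimensional irreducible representation to $C_0(X)$, reading off its finite support $F$, and using $h^*gh$ for a bisection-supported $h$ to show that $F$ is $\G$-invariant is exactly the right mechanism.

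In the sufficiency half, the reduction to finite orbits is sound. Lower semicontinuity of $z\mapsto\|\lambda_z(a)\|$ is proved by transporting finitely supported vectors of $\ell^2(\G_z)$ to nearby fibres along disjoint bisections; approximating $a$ by $C_c(\G)$ elements is only the easy last step. The factorisation $\lambda_x=\lambda^{\G|_{[x]}}_x\circ\rho_x$ is also correct.

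The two external inputs should be justified or cited rather than asserted:
\begin{enumerate}
\item Amenability of $H=\G_x^x$. Take a net of positive-type functions $f_i\in C_c(\G)$ with $f_i\to 1$ uniformly on compacta, and restrict it to the closed discrete subgroup $H$. This gives finitely supported positive definite functions on $H$ converging pointwise to $1$.
\item RFD of $C^*(H)$ for amenable MAP $H$. This is Bekka's theorem, and it is where amenability is genuinely needed, as you note.
\end{enumerate}

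For the application in this paper, Bekka's theorem is unnecessary. By Proposition \ref{map}, every isotropy group is trivial or infinite cyclic, so $C^*(H)$ is $\mathbb{C}$ or $C(\mathbb{T})$, which is commutative and hence RFD. Your argument then gives a self-contained proof of exactly the case the paper needs.

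Finally, you tacitly use that $\G$ is Hausdorff: for the Urysohn function in the necessity step, and for continuity of elements of $C_c(\G)$ in the semicontinuity argument. This holds for graph groupoids.
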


\section{Main result}

\begin{proposition}
 \label{map}
    For any graph, all the isotropy groups of the graph groupoid are MAP.
\end{proposition}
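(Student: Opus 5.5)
The plan is to show that every isotropy group $\G_x^x$ embeds into $\Z$. Since $\Z$ is abelian, it is MAP: its characters $k\mapsto e^{ik\theta}$ separate points. Any subgroup of a MAP group is again MAP, because the restrictions of a separating family of finite-dimensional representations still separate the points of the subgroup. So the whole argument reduces to producing an injective homomorphism from $\G_x^x$ into $\Z$.

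Fix $x\in\partial G$ and define $c:\G_x^x\to\Z$ by $c(x,k,x)=k$. This is a homomorphism, since the product $(x,k,x)(x,l,x)=(x,k+l,x)$ is exactly the groupoid multiplication in the definition of $\G$. It is injective, because an element of $\G_x^x$ is a triple $(x,k,x)$ and is therefore determined by its integer $k$; the trivial element is $(x,0,x)$. Hence $\G_x^x\cong c(\G_x^x)$, a subgroup of $\Z$.

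Concretely, $c(\G_x^x)$ is either $\{0\}$ or $p\Z$ for some $p\ge 1$, and both are MAP. One can identify which case occurs, although the argument does not need it. If $x$ is eventually periodic, meaning $x=\mu\lambda\lambda\lambda\cdots$ with $\lambda$ a cycle, then the group is $|\lambda|\Z$ for a minimal such $\lambda$. Otherwise, including when $x$ is a finite path ending at a singular vertex, the group is trivial, since $\sigma^m(x)=\sigma^n(x)$ forces $m=n$ in that case.

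No step is a genuine obstacle. The only point needing care is observing that the triple encodes the whole element, so the map $c$ is injective, together with the standard fact that MAP passes to subgroups and holds for abelian groups.
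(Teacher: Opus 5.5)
Your proof is correct and follows the paper's route: both embed $\G_x^x$ into $\Z$ via $(x,k,x)\mapsto k$ and conclude from abelianness. Your explicit characters $k\mapsto e^{ik\theta}$ together with restriction to subgroups give a slightly cleaner justification than the paper's appeal to one-dimensional irreducible representations. One small caveat: your optional classification remark is accurate only if ``cycle'' means an arbitrary closed path, since with the paper's distinct-vertex definition $x=(ef)^\infty$ for two loops $e,f$ at one vertex has isotropy $2\Z$; the proof does not use that remark.
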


\begin{proof}
    Let $G$ be a graph. Let $(x,0,x)$ be in $\G^0$. 
    
    $$\G^x_x = \{\gamma \in \G| s(\gamma) = x, r(\gamma) = x \}.$$ 

    Either $x$ has no period (no shift brings back to x) and then $\G^x_x = \{(x,0,x)\}$ or $x$ has a minimal period (noted $l$) and then $\G^x_x =\{(x,l.k,x)|k\in \Z\}$. In any case

    $$\G^x_x \subseteq\{(x,k,x)|k\in \Z\} \cong \Z.$$ 
    
    $\Z$ is abelian so it implies that the irreducible representations of $\G^x_x$ are of dimension at most one. 
    So the isotropy groups are MAP.
\end{proof}

\begin{remark}

    The topology on the unit space (the boundary path space) has basis 
    $$ \{Z(\mu\backslash F)|\mu \in G^*,F \text{ finite subset of } r(\mu)G^1\}. $$

    According to Theorem \ref{ThmRFD}, when we want to prove that $C^*$-algebra of a graph is not RFD, it is sufficient to find a set of the form $Z(\mu)$ not containing periodic points.
    



\end{remark}

\begin{lemma}
\label{backward}
  Let $G$ be a  graph. If $G$ contains an infinite backward chain, then $C^*(G)$ is not RFD.
\end{lemma}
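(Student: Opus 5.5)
By the preceding Remark and Theorem \ref{ThmRFD}, it suffices to exhibit a nonempty open set $Z(\mu)$ in $\partial G$ that contains no periodic point. The plan is to take $\mu$ to be one edge of the backward chain and to show that every boundary path starting with that edge has an infinite orbit. The reason is that the distinct edges behind it provide infinitely many pairwise distinct ``prefixed'' versions of the path, all lying in the same orbit.

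Fix the infinite backward chain $\ldots e_{-2}e_{-1}e_0e_1\ldots$ of distinct edges, and take $\mu=e_0$. Then $Z(e_0)$ is a nonempty open set. To see it is nonempty, note that from $r(e_0)$ one can always produce a boundary path: follow edges until reaching a singular vertex, and if one is never reached, the resulting infinite path lies in $G^\infty$.

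Now let $x\in Z(e_0)$ be arbitrary. For each $k\ge 0$, set
\[
x_k=e_{-k}e_{-k+1}\cdots e_{-1}x .
\]
This is a legitimate element of $\partial G$. It is a concatenation of the finite path $e_{-k}\cdots e_{-1}$ with $x$, and the ranges and sources match because $r(e_{-1})=s(e_0)=s(x)$. It is a boundary path because its tail is that of $x$, so it is either infinite or ends at the same singular vertex. Moreover $\sigma^k(x_k)=\sigma^0(x)$, so $(x_k,k,x)\in\G$ and hence $x_k\sim x$.

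It remains to check that the $x_k$ are pairwise distinct. For $k\neq k'$, the first edge of $x_k$ is $e_{-k}$ and the first edge of $x_{k'}$ is $e_{-k'}$. These differ because the edges of the chain are distinct. Thus $|[x]|=\infty$, so no point of $Z(e_0)$ is periodic. Theorem \ref{ThmRFD} then gives that $C^*(\G)\cong C^*(G)$ is not RFD, using Proposition \ref{Iso}.

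The main obstacle is essentially bookkeeping. One must make sure the backward indexing really supplies edges $e_{-k}$ for all $k$, which is what ``each edge has a predecessor'' guarantees in both pictured forms of the chain. One must also confirm that distinctness of first edges suffices to separate the $x_k$. This holds even when $x$ is itself a finite path, since paths are determined by their edge sequences.
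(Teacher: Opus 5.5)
Your proof is correct and is essentially the paper's argument: fix an edge $e_0$ of the chain, prepend the predecessors $e_{-k}\cdots e_{-1}$ to any $x\in Z(e_0)$ to get infinitely many points $x_k$ with $(x_k,k,x)\in\G$, and conclude with Theorem \ref{ThmRFD} and Proposition \ref{Iso}. Your checks that $Z(e_0)$ is nonempty and that the $x_k$ are pairwise distinct fill in details the paper leaves implicit.
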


\begin{proof}
    
    Let $z$ be an infinite backward chain. 
    If $z$ has an end, we write it $z= ...\mu_{-1}\mu_0$ with $\mu_0$ the last edge. 
    
    If $z$ has no end, we choose one edge in $z$ that we call $\mu_0$ and we write $z =...\mu_{-1}\mu_0\mu_{1}... $. 
    
    
    We will show that $Z(\mu_{0})$ does not contain a periodic point. 
    

    
    
     
     
     


     Let $x$ be in $Z(\mu_{0})$. Since $ \{(\mu_{-1} x,1,x)\}\subseteq \G_{x}^{\mu_{-1} x}$, then  $\mu_{-1}x\in \G^0$ is in the orbit of $x\in\G^0$. 
    
    Similarly, we have that all the paths $\mu_{-1}x$, $\mu_{-2}\mu_{-1}x$, $\mu_{-3}\mu_{-2}\mu_{-1}x$,... seen as element of $\G^{0}$ are all in the orbit of $(x,0,x)$. So the orbit is infinite.     
     
     So the orbit is infinite and $(x,0,x)$ is not periodic. So $Z(\mu_0)$ does not contain a periodic point. 
     
     And so $\G^{0}$ does not contain a dense  subset of periodic point. By Theorem \ref{ThmRFD}, $C^*(\G)$ is not RFD. By Proposition \ref{Iso}, $C^*(\G) \cong C^*(G)$. And so $C^*(G)$ is not RFD.
\end{proof}


\begin{lemma}
\label{receiver}
    Let $G$ be a graph. If $G$ contains an infinite receiver, then $C^*(G)$ is not RFD.
\end{lemma}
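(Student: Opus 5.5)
The approach mirrors Lemma \ref{backward}. We exhibit a basic open set of $\partial G$ containing no periodic point, and then apply Theorem \ref{ThmRFD} together with Proposition \ref{Iso}.

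Let $v\in G^0$ be an infinite receiver, so $r^{-1}(v)=\{e_1,e_2,\dots\}$ is an infinite set of distinct edges. The candidate open set is $Z(v)$. It is nonempty: from $v$ one can always extend a path, either forever or until it reaches a sink or an infinite emitter, and this produces an element of $v\partial G$.

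Now fix any $x\in Z(v)$. For each $i$, the path $e_ix$ lies in $\partial G$, since it is a finite path ending at a singular vertex or an infinite path according as $x$ is. We have $\sigma(e_ix)=x$, so $(e_ix,1,x)\in\G$. Hence every $e_ix$ lies in the orbit $[x]$. The points $e_ix$ are pairwise distinct because their first edges differ, so $|[x]|=\infty$ and $x$ is not periodic.

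It follows that the nonempty open set $Z(v)$ contains no periodic point. Therefore the periodic points are not dense in $\G^0=\partial G$. By Theorem \ref{ThmRFD}, together with the amenability of $\G$, $C^*(\G)$ is not RFD, and by Proposition \ref{Iso} neither is $C^*(G)$.

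There is no real obstacle here. The only point needing care is that $Z(v)\neq\emptyset$ and that each $e_ix$ is a legitimate boundary path. Both follow directly from the definition of $\partial G$, since the range of a finite boundary path lies in $G^0_{sing}$ and prepending an edge does not change the range.
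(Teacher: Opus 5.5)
Your proof is correct and is the paper's argument: the paper also takes $Z(p_0)$ for the infinite receiver $p_0$, notes that the orbit of every $x\in Z(p_0)$ contains the infinitely many points $\mu_i x$, and concludes via Theorem \ref{ThmRFD} and Proposition \ref{Iso}. Your extra checks fill in details the paper leaves implicit, namely that $Z(v)\neq\emptyset$, that each $e_ix$ is a boundary path with $\sigma(e_ix)=x$ (including the case where $x=v$ is a vertex), and that these points are pairwise distinct.
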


\begin{proof}
    Let $G$ be a graph with an infinite receiver vertex called $p_0$. We label $\mu_1, \mu_2,\dots$ the edges whose range is $p_0$. 
    For each element $x$ of $Z(p_0)$, we have that the orbit of $x$ contains 

    $$ \{\mu_1 x,\mu_2 x,\dots\} .$$

    So the orbit of $x$ is infinite and the open set $Z(p_0)$ does not contain a periodic element. By Theorem \ref{ThmRFD}, $C^*(\G)$ is not RFD. By Proposition \ref{Iso}, $C^*(\G) \cong C^*(G)$. And so $C^*(G)$ is not RFD.
    
\end{proof}

\begin{lemma}[\cite{bellier2025rfd} Lemma 4.1]
\label{MyNecessaryCondition} Let $G$ be  a graph. If $C^*(G)$ contains a cycle with an exit, then $C^*(G)$ is not RFD.
\end{lemma}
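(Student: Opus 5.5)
We will argue as in Lemmas \ref{backward} and \ref{receiver}: by Theorem \ref{ThmRFD} and Proposition \ref{Iso}, it is enough to find a nonempty open set of $\partial G$ containing no periodic point of $\G$. Let $\mu=\mu_1\cdots\mu_n$ be a cycle with $s(\mu)=r(\mu)=v$, and let $f$ be an exit, i.e. an edge with $s(f)=s(\mu_i)$ for some $i$ and $f\neq\mu_i$. Rotating the cycle, we may assume $s(f)=v$ and $f\neq\mu_1$. The candidate open set is the cylinder $Z(\mu f)$, which is nonempty because it contains $\mu f y$ for any boundary path $y$ starting at $r(f)$; such a $y$ exists by following edges until a sink or infinite emitter is reached, or forever.

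Take $x=\mu f y\in Z(\mu f)$. For each $k\ge 1$, the points $\mu^k f y$ lie in the orbit of $x$, since $\sigma^{kn}(\mu^k f y)=fy=\sigma^{n}(x)$ and so $(\mu^k f y,(k-1)n,x)\in\G$. These points are pairwise distinct: the path $\mu^k f y$ agrees with $\mu^\infty$ on exactly its first $kn$ edges, because edge $kn+1$ is $f\neq\mu_1$. Hence the orbit $[x]$ is infinite and $x$ is not periodic. Consequently $Z(\mu f)$ is a nonempty open set with no periodic points, the periodic points are not dense, and Theorem \ref{ThmRFD} together with Proposition \ref{Iso} shows that $C^*(G)$ is not RFD.

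The main obstacle is the distinctness of the points $\mu^k f y$. The argument above handles it: the first position where the path leaves $\mu^\infty$ is determined by $k$, and this uses that the exit $f$ is a different edge from the cycle edge $\mu_1$. A second point needing care is the rotation of the cycle. It is harmless because every rotation of a cycle is again a cycle, but the set $Z(\mu f)$ must be written using the rotated cycle, so that $\mu f$ is a genuine path.
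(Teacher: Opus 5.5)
Your proof is correct: $Z(\mu f)$ is a nonempty open set, each $x=\mu f y$ in it has the orbit-mates $\mu^k f y$, and these are pairwise distinct because their first deviation from $\mu^\infty$ occurs at position $kn+1$. The paper gives no proof of this lemma. It imports it from \cite{bellier2025rfd}, whose technique the introduction contrasts with the groupoid approach used here. So you have given a genuinely different, groupoid-style proof, parallel to Lemmas \ref{backward}, \ref{receiver} and \ref{lastCond}.

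The natural direct argument is a rank count. After rotating the cycle, $s_\mu^*s_\mu=p_v$ while $s_\mu s_\mu^*+s_fs_f^*\le s_{\mu_1}s_{\mu_1}^*+s_fs_f^*\le p_v$. So for any finite-dimensional representation $\rho$ one gets $\mathrm{rank}\,\rho(p_v)\ge\mathrm{rank}\,\rho(p_v)+\mathrm{rank}\,\rho(s_fs_f^*)$, hence $\rho(s_f)=0$, although $s_f\neq 0$. Equivalently, $p_v$ is an infinite projection, which cannot occur in an RFD algebra.

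That route uses only the Cuntz--Krieger relations. It needs neither amenability, nor Theorem \ref{ThmRFD}, nor Proposition \ref{Iso}, and it names an explicit nonzero element killed by every finite-dimensional representation. Your route costs the full groupoid machinery. In exchange, all four necessary conditions of Theorem \ref{general} become instances of one criterion (a nonempty open subset of $\partial G$ with no periodic points), so the necessity half of the theorem is self-contained within the paper.

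The two arguments detect the same obstruction. Under Proposition \ref{Iso}, the characteristic function of $Z(\mu f)$ corresponds to the range projection of $s_\mu s_f$. The rank count shows that this projection vanishes in every finite-dimensional representation.
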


\begin{lemma}
\label{lastCond}
    Let $G$ be a graph such that there exists $v\in G^0$, such that there is no finite path from $v$ to a sink or a cycle or an infinite emitter.
    Then  $C^*(G)$ is not RFD.
\end{lemma}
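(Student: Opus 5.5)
We will show, as in Lemmas \ref{backward} and \ref{receiver}, that the open set $Z(v)$ contains no periodic point, and then conclude with Theorem \ref{ThmRFD} and Proposition \ref{Iso}.

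First we record what the hypothesis gives. Let $W$ be the set of vertices reachable from $v$ by a finite path. By assumption no vertex of $W$ is a sink, an infinite emitter, or lies on a cycle. Since every vertex of $W$ is neither a sink nor an infinite emitter, $W\cap G^0_{sing}=\emptyset$. Consequently every boundary path starting at $v$ is an infinite path $x=x_1x_2\ldots$, since a finite boundary path must end in $G^0_{sing}$. Moreover, because no vertex of $W$ lies on a cycle, the vertices $s(x_1),s(x_2),\ldots$ along $x$ are pairwise distinct. Indeed, a repetition $s(x_i)=s(x_j)$ with $i<j$ would give a closed path, and a closed path contains a cycle through one of its vertices, all of which are in $W$. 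Finally, $Z(v)$ is nonempty: starting from $v$ we can always continue, since no vertex of $W$ is a sink.

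Now let $x\in Z(v)$ and consider the shifted paths $\sigma^n(x)$ for $n\in\N$. Each $\sigma^n(x)$ lies in the orbit of $x$, because $(x,n,\sigma^n(x))\in\G$ as $\sigma^n(x)=\sigma^0(\sigma^n(x))$. These shifted paths are pairwise distinct, since $\sigma^n(x)$ has source $s(x_{n+1})$ and these sources are distinct. Hence the orbit $[x]$ is infinite, so $x$ is not periodic. The nonempty open set $Z(v)$ therefore contains no periodic point, and $\partial G$ has no dense set of periodic points. By Theorem \ref{ThmRFD}, $C^*(\G)$ is not RFD, and by Proposition \ref{Iso} neither is $C^*(G)$.

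The main point needing care is the claim that no boundary path from $v$ is finite and that the vertices along an infinite path from $v$ are distinct. This is exactly where we unpack the hypothesis. We apply it to every vertex of $W$, not only to $v$: a path from $v$ to some $w\in W$ followed by a path from $w$ to a sink, cycle or infinite emitter would give a forbidden path from $v$. We must also check that a closed path yields a cycle in the paper's sense (distinct sources), by taking a shortest closed subpath.
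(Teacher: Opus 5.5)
Your proof is correct and follows essentially the same route as the paper's: every $y\in Z(v)$ is an infinite path whose shifts $\sigma^n(y)$ are pairwise distinct elements of its orbit, so the open set $Z(v)$ contains no periodic point, and Theorem \ref{ThmRFD} with Proposition \ref{Iso} concludes. You are also somewhat more careful than the paper: you check that $Z(v)$ is nonempty, apply the hypothesis to every vertex reachable from $v$ rather than only to $v$, and extract a genuine cycle (distinct sources) from a repeated vertex.
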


\begin{proof}
    Suppose that there exists a vertex $v\in G^0$, such that there is no finite path from $v$ to a sink, a cycle nor an infinite emitter. Then for any $y\in Z(v)$, $y$ is an infinite path. Since $v$ does not lead to a cycle, all the edges of $y$ are distinct. Hence the orbit of $y$ contains the infinite set $ \{\sigma^n(y),n\in \N\}$. 
    So the orbit of $y$ is infinite and the open set $Z(v)$ does not contain a periodic element. By Theorem \ref{ThmRFD}, $C^*(\G)$ is not RFD. By Proposition \ref{Iso}, $C^*(\G) \cong C^*(G)$. And so $C^*(G)$ is not RFD.
\end{proof}

\begin{lemma}
\label{konig}
    Let $G$ be a directed graph with no cycle, no infinite receiver and a vertex $w_0$ such that each  vertex of the graph $G$ belong to a path that ends in $w_0$. 
    
    If $G$ contains infinitely many vertices, then $G$ contains an infinite backward chain.
\end{lemma}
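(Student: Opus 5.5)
This is a König's lemma argument applied to the reversed graph. Since every vertex lies on a path ending at $w_0$, I will build a rooted tree of backward paths ending at $w_0$, show it is infinite and locally finite, and extract an infinite branch.

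\textbf{The tree.} Let $T$ be the set of finite paths $\mu\in G^*$ with $r(\mu)=w_0$; this includes the trivial path $w_0$. Declare the parent of $\mu=\mu_1\mu_2\cdots\mu_n$ (with $n\ge 1$) to be $\mu_2\cdots\mu_n$. In other words, the children of $\nu$ are the paths $e\nu$ with $r(e)=s(\nu)$. This makes $T$ a rooted tree with root $w_0$.

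\textbf{Local finiteness.} The children of $\nu$ correspond bijectively to $r^{-1}(s(\nu))$. Since $G$ has no infinite receiver, this set is finite, so every node of $T$ has finitely many children.

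\textbf{Infiniteness.} By hypothesis, each vertex $v$ lies on a path ending in $w_0$. The tail of that path starting at $v$ is an element of $vG^*w_0$, so the map $T\to G^0$, $\mu\mapsto s(\mu)$, is surjective. Since $G^0$ is infinite, $T$ is infinite.

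\textbf{Extracting the branch.} By König's lemma, $T$ has an infinite branch $w_0,\ e_0,\ e_{-1}e_0,\ e_{-2}e_{-1}e_0,\dots$. This gives edges with $r(e_{i-1})=s(e_i)$ for all $i\le 0$, and every edge in the sequence has a predecessor in the sequence.

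To make this a self-contained step, I would give the usual direct argument rather than cite König. Call a node \emph{good} if it has infinitely many descendants; the root is good. A good node has finitely many children, and its infinitely many descendants are distributed among them, so at least one child is good. Choosing a good child repeatedly produces the branch.

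\textbf{Distinctness.} The definition of an infinite backward chain requires the edges to be distinct. Suppose $e_i=e_j$ with $j<i\le 0$. Then $e_j e_{j+1}\cdots e_{i-1}$ is a path from $s(e_j)$ to $r(e_{i-1})=s(e_i)=s(e_j)$, i.e. a closed path of positive length. Any closed path of positive length contains a cycle in the paper's sense (no repeated sources): take a minimal closed subpath. This contradicts the assumption that $G$ has no cycle. The same argument shows the vertices along the branch are pairwise distinct.

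The result is an infinite backward chain ending at $w_0$, which is the second pictured form in the definition. The only step needing care is the König argument, specifically that local finiteness of $T$ follows from the absence of infinite receivers. That is immediate here, so I expect no real obstacle.
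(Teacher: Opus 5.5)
Your proof is correct and is essentially the paper's argument. The paper also builds the chain backwards from $w_0$ by repeatedly applying the pigeonhole principle to the finitely many edges entering the current vertex, each time keeping an edge whose source still has infinitely many vertices behind it, and then uses the absence of cycles to get distinct edges. You package the same recursion as König's lemma on the explicit tree of paths ending at $w_0$, counting descendant paths rather than backward-reachable vertices; this difference is only cosmetic.
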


\begin{remark}
    This lemma is an adaptation of the König's lemma to some directed graphs.
\end{remark}

\begin{proof}
We will build the infinite backward chain by recursion.

We start with the vertex $w_0$. It can be seen as a  path of length $0$.

By assumption, there are infinitely many vertices in $G$ and each of them is on a path that ends in $w_0$. So there are infinitely many paths that end in $w_0$. But $G$ does not contain infinite receiver so there are only finitely many edges with range $w_0$. 

By the pigeonhole principle, there is at least one edge that starts from a vertex $w_1$ such that there are infinitely many vertices of the graph $G$ that belong to a path that ends in $w_1$. Since $G$ has no cycle, $w_1$ is different from $w_0$.

With the path from $w_1$ to $w_0$, we have a backward path of length $1$.

Now, assume we have build  a backward path of length $i$ for $i\in \N$ that starts from the vertex $w_i$ and such that there are infinitely many vertices of the graph $G$ that belongs to a path that ends in $w_i$. Since $G$ has no cycle, all edges of this path are distinct.

Since there is no infinite receiver, by the pigeonhole principle, there is at least one edge that starts from a vertex $w_{i+1}$ such that there are infinitely many vertices of the graph $G$ that belongs to a path that ends in $w_{i+1}$. Since $G$ has no cycle, this new edge from $w_{i+1}$ to $w_i$ is distinct from all other edges of the current path from $w_{i}$ to $w_0$.

With the path from $w_{i+1}$ to $w_0$ passing by all $w_i$ previously chosen, we have a backward path of length $i+1$ with all its edges distinct.

By recurrence, we build a sequence $(x_i)_i$ of backward paths ending at $w_0$ with distinct edges and length $i$ for the $i^{th}$ term of the sequence.


Since for each $i\in\N$, $x_{i+1}$ is an extension of $x_i$, the limit is a well define infinite backward chain in $G$.
\end{proof}

\begin{lemma}
\label{finitepaths}
    Let $G$ be a directed graph with no cycle, no infinite receiver and a vertex $w_0$.
    If $G$ has finitely many vertices then $G$ has finitely many paths that end in $w_0$.
\end{lemma}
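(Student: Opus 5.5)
The plan is to bound the length of any path ending in $w_0$ and then count. Since $G$ has no cycle, every finite path in $G$ visits pairwise distinct vertices. Indeed, if a path $\nu=\nu_1\ldots\nu_n$ had $s(\nu_i)=s(\nu_j)$ for some $i<j$ (or $r(\nu_n)=s(\nu_i)$), then the subpath between the two occurrences would be a closed path. A closed path of minimal length among its subpaths is a cycle in the sense of the definition, which is excluded. So a path of length $n$ passes through $n+1$ distinct vertices. If $G^0$ is finite with $N$ elements, every path in $G$, and in particular every path ending in $w_0$, therefore has length at most $N-1$.

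Next I will count the paths ending in $w_0$ of a fixed length $n\le N-1$, by induction on $n$. For $n=0$ there is only the vertex $w_0$. Suppose there are finitely many paths of length $n$ ending in $w_0$. Every path of length $n+1$ ending in $w_0$ has the form $e\nu$, where $\nu$ is such a path of length $n$ (or $w_0$ itself when $n=0$) and $e\in r^{-1}(s(\nu))$. Since $G$ has no infinite receiver, each $r^{-1}(s(\nu))$ is finite. Hence the number of paths of length $n+1$ ending in $w_0$ is at most $\sum_\nu |r^{-1}(s(\nu))|<\infty$.

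Finally, the set of paths ending in $w_0$ is the union over $0\le n\le N-1$ of these finite sets, so it is finite.

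The only slightly delicate step is the first one: extracting a genuine cycle, with distinct sources as the definition requires, from a repeated vertex. I will handle it by choosing the repeated pair $i<j$ with $j-i$ minimal, so that the intermediate sources are automatically distinct. The rest is routine counting.
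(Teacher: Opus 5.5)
Your proposal is correct and follows essentially the same route as the paper. Both arguments use acyclicity to bound path lengths by the number of vertices, then use finiteness of each $r^{-1}(v)$ to count paths of each length. The paper does the count with a uniform bound $M=\max_{p}|r^{-1}(p)|$, giving at most $\sum_{i=0}^{N}M^i$ paths, while your induction on length and your minimal-$(j-i)$ extraction of a genuine cycle spell out the same steps more carefully.
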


\begin{proof}
     First, we use the fact that the length of a path without cycle is the number of its distinct vertices minus one. Since $G$ has no cycle, each path has no cycle. And since $G$ has finitely many vertices, say $N+1$, the length of the paths is bounded by $ N$.

     Second, $G$ has no infinite receiver and has finitely many vertices. So for each vertex $p$, $\#\{e\in G^1|r(e)=p\}<\infty$. Since there are finitely many vertices, we can take  $M = \max_{p\in G^0}\#\{e\in G^1|r(e)=p\}<\infty$.

     To see how these bounds imply a finite number of paths that end in $w_0$, we index the incoming edges of each vertex. Meaning that for each vertex $v_i$ which is not a source, we have $r^{-1}(v_i) = \{e_1,...,e_{k_i}\}$ with $0<k_i< M$ for all $v_i$.




Then the number of paths in $G$ is bounded by $\sum_{i=0}^N M^i$.

So $G$ has finitely many paths that end in $w_0$.
\end{proof}

\begin{theorem}
\label{general}
    Let $G$ be a graph. Then

    $C^*(G)$ is RFD if and only if $G$ has all of the following properties
    
    \begin{itemize}
        \item[a)] no infinite receiver
        \item[b)] no cycle with an exit
        \item[c)] no infinite backward chain
        \item[d)] for each $v\in G^0$, there is a finite path from $v$ to a sink or a cycle or an infinite emitter.
    \end{itemize}
\end{theorem}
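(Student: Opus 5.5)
The necessity direction is already done. If $C^*(G)$ is RFD, then (a), (b), (c), (d) follow respectively from Lemma \ref{receiver}, Lemma \ref{MyNecessaryCondition}, Lemma \ref{backward} and Lemma \ref{lastCond}. So the work is in sufficiency. By Theorem \ref{ThmRFD}, Proposition \ref{map} and Proposition \ref{Iso}, it suffices to show that under (a)--(d) the periodic points of $\partial G$ are dense. Since the sets $Z(\mu\backslash F)$ form a basis, we need, for each $\mu\in G^*$ and finite $F\subseteq r(\mu)G^1$, a point of $\partial G$ in $Z(\mu\backslash F)$ with finite orbit.

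The candidate points come from (d). Put $v=r(\mu)$ and extend $\mu$ by a finite path $\nu$ from $v$ to a sink, a cycle or an infinite emitter $w$; the first edge of $\nu$ must avoid $F$.
\begin{itemize}
\item If $w$ is a sink, take $x=\mu\nu$.
\item If $w$ is an infinite emitter, take $x=\mu\nu$, a finite boundary path. When $v$ itself is an infinite emitter, $x=\mu$ already lies in $Z(\mu\backslash F)$.
\item If $w$ lies on a cycle $c$, take $x=\mu\nu ccc\cdots$. By (b) the cycle has no exit, so $Z(w)$ consists only of $ccc\cdots$.
\end{itemize}
The constraint on the first edge is easy to meet. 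If $v$ is a sink or infinite emitter we take $\nu=v$. If $v$ is on an exitless cycle, every edge out of $v$ lies on the cycle, so $Z(v)=\{c^\infty\}$ and $F$ must be empty, because $Z(\mu\backslash F)$ with $F$ containing the unique edge would be empty; we may simply assume the basic set is nonempty. Otherwise any edge out of $v$ not in $F$ works, since (d) applied to its range gives a continuation; if all edges lie in $F$ then $v$ is a finite emitter and the basic set is empty.

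Next we show such $x$ has finite orbit. Two boundary paths are in the same orbit iff they are shift-tail equivalent, so $[x]$ consists of the paths $\alpha y$ with $y$ a tail of $x$ and $r(\alpha)=s(y)$. The possible tails $y$ form a finite set: in the sink and emitter cases they are the suffixes of $\mu\nu$; in the cycle case they are suffixes of $\mu\nu$ together with the finitely many rotations of $c^\infty$. It remains to count the finite paths $\alpha$ ending at a fixed vertex $u$.

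To do this, let $H$ be the subgraph of all vertices and edges lying on paths ending at $u$. If $u$ is not on a cycle, then $H$ has no cycle: a cycle in $H$ would have a path into $u$ and hence an exit, contradicting (b). When $u$ is on the cycle $c$, removing the cycle edges gives an acyclic graph, and because $c$ has no exit, every path into $u$ has the form $\beta c^k$ with $\beta$ entering the cycle. Here the paths $c^k$ give distinct $\alpha$, but $\alpha c^k y=\alpha y'$ with $y'$ a rotation tail, so the resulting points of $\partial G$ are finitely many. The acyclic graph $H$ has no infinite receiver by (a). If it had infinitely many vertices, Lemma \ref{konig} would produce an infinite backward chain, contradicting (c). So $H$ is finite, and Lemma \ref{finitepaths} shows there are finitely many paths ending at $u$. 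Hence $[x]$ is finite, and $x$ is periodic.

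I expect the main obstacle to be the bookkeeping in this last step when $x$ ends in a cycle. There I would work with the subgraph obtained by deleting the cycle's edges and apply Lemmas \ref{konig} and \ref{finitepaths} to each cycle vertex. I would also verify that Lemma \ref{konig} applies to $H$, whose vertices by construction lie on paths into $u$.
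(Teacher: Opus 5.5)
Your proposal is correct and follows essentially the same route as the paper. Necessity comes from the four lemmas. For sufficiency, you place in each nonempty $Z(\mu\backslash F)$ a boundary path that ends at a sink or infinite emitter, or winds around an exitless cycle, and then bound its orbit using the subgraphs of paths into each of its vertices together with Lemmas \ref{konig} and \ref{finitepaths}. Your tail-equivalence description of the orbit, and your observation that the infinitely many paths $\beta c^k$ into a cycle vertex collapse to finitely many points of $\partial G$, make precise the paper's rather loose count in its Case 3.
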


\begin{remark}
    Before the proof, we illustrate that all those conditions are independent of each other. The figure \ref{figA} shows a graph satisfying the conditions b), c) and d) but not a). The figure \ref{figB} shows a graph satisfying the conditions a), c) and d) but not b). The figure \ref{figC} shows a graph satisfying the conditions a), b) and d) but not c). The figure \ref{figD} shows a graph satisfying the conditions a), b) and c) but not d).

\begin{figure}
\centering
\begin{subfigure}{0.4\textwidth}
    \begin{tikzpicture}[node distance={15mm}, thick, main/.style = {draw, fill, circle, inner sep=1.5pt}, second/.style = {draw, circle}] 

\node[main, label=left:] (v) at (0,0){}; 
\node[main, label=left:] (inf1) at (-1,1){};
\node[main, label=left:] (v1) at (-1,0){};
\node[main, label=left:] (v2) at (-1,-1){};
\node[main, label=left:] (v3) at (0,-1){};
\node[main, label=left:] (inf2) at (1,-1){};
\node[main, label=left:] (v4) at (1,1){};
\node[main, label=left:] (v5) at (2,2){};
\node[main, label=left:] (inf3) at (3,3){};
\node[main, label=left:] (v6) at (2,0){};
\node[main, label=left:] (v7) at (3,1){};

\path[-angle 90,font=\scriptsize]

(inf1) edge [dotted,""]   (v)
(v1) edge [""]   (v)
(v2) edge [""]   (v)
(v3) edge [""]   (v)
(inf2) edge [dotted,""]   (v)

(v) edge [""]   (v4)
(v4) edge [""]   (v6)

(v4) edge [""]   (v5)
(v5) edge [""]   (v7)
(v5) edge [dotted,""]   (inf3);

\end{tikzpicture}
    \caption{}
    \label{figA}
\end{subfigure}
\hfill
\begin{subfigure}{0.4\textwidth}
    \begin{tikzpicture}[node distance={15mm}, thick, main/.style = {draw, fill, circle, inner sep=1.5pt}, second/.style = {draw, circle}] 
    
\node[main, label=left:] (v) at (0,0){}; 
\node[main, label=left:] (v1) at (-1,0){};
\node[main, label=left:] (v2) at (-1,-1){};
\node[main, label=left:] (v3) at (0,-1){};
\node[main, label=left:] (v4) at (1,1){};
\node[main, label=left:] (v5) at (2,2){};
\node[main, label=left:] (inf3) at (3,3){};
\node[main, label=left:] (v6) at (2,0){};
\node[main, label=left:] (v7) at (3,1){};

\path[-angle 90,font=\scriptsize]

(v) edge [""]   (v1)
(v1) edge [dotted,""]   (v2)
(v2) edge [""]   (v3)
(v3) edge [""]   (v)

(v) edge [""]   (v4)
(v4) edge [""]   (v6)
(v6) edge [out=0,in=-60,distance=2cm,""]   (v6)

(v4) edge [""]   (v5)
(v5) edge [""]   (v7)
(v5) edge [""]   (inf3);

\end{tikzpicture} 
    \caption{}
    \label{figB}
\end{subfigure}
\hfill
\begin{subfigure}{0.4\textwidth}
    \begin{tikzpicture}[node distance={15mm}, thick, main/.style = {draw, fill, circle, inner sep=1.5pt}, second/.style = {draw, circle}] 
\node[main, label=left:] (v) at (0,0){}; 
\node[main, label=left:] (inf1) at (-1,-1){};
\node[main, label=left:] (v4) at (1,1){};
\node[main, label=left:] (v5) at (2,2){};
\node[main, label=left:] (inf3) at (3,3){};
\node[main, label= left:] (v6) at (2,0){};
\node[main, label=left:] (v7) at (3,1){};
\node[main, label=left:] (vinf1) at (2.5,0.5){};
\node[main, label=left:] (vinf2) at (1.5,-0.5){};

\path[-angle 90,font=\scriptsize]

(v) edge [""]   (v4)
(inf1) edge [dotted,""]   (v)

(v4) edge [""]   (v5)
(v4) edge [dotted,""]   (vinf1)
(v4) edge [""]   (v6)
(v4) edge [dotted,""]   (vinf2)

(v5) edge [""]   (v7)
(v5) edge [dotted,""]   (inf3);

\end{tikzpicture} 
    \caption{}
    \label{figC}
\end{subfigure}
\hfill
 \begin{subfigure}{0.4\textwidth}
    \begin{tikzpicture}[node distance={15mm}, thick, main/.style = {draw, fill, circle, inner sep=1.5pt}, second/.style = {draw, circle}] 
    
\node[main, label=left:] (v) at (0,0){}; 
\node[main, label=left:] (v1) at (1,1){};
\node[main, label=left:] (v2) at (2,2){};
\node[main, label=left:] (inf1) at (3,3){};

\path[-angle 90,font=\scriptsize]

(v) edge [""]   (v1)

(v1) edge [""]   (v2)
(v2) edge [dotted,""]   (inf1);

\end{tikzpicture} 
    \caption{}
    \label{figD}
 \end{subfigure}
        
\caption{Independance of the four conditions of the theorem \ref{general}.}
\label{ContrEx}
\end{figure}
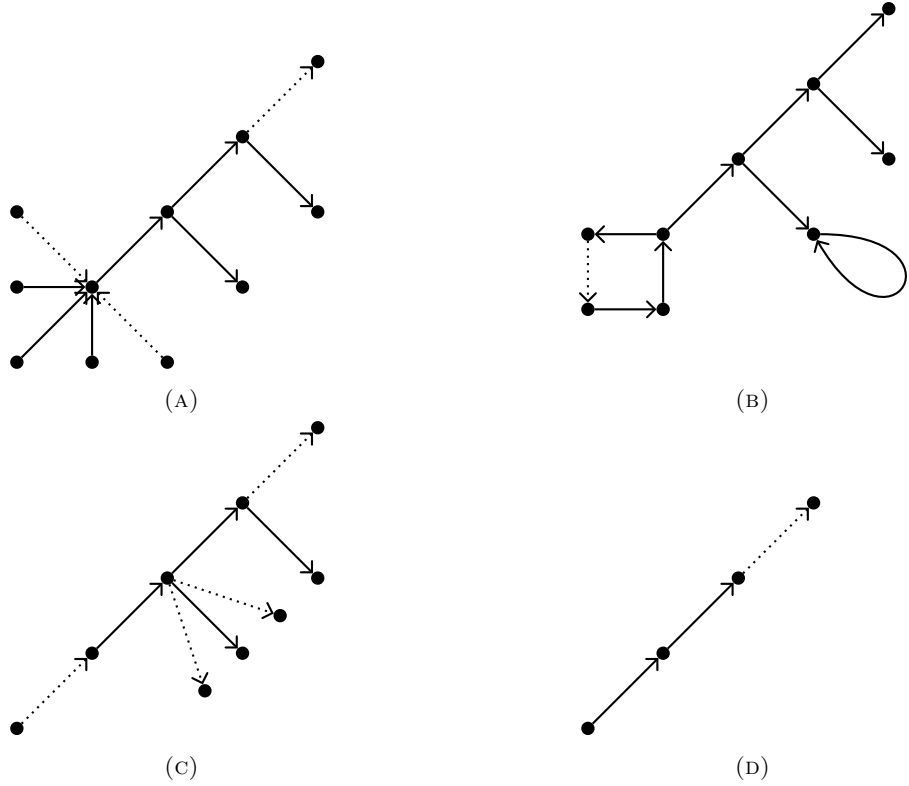

\end{remark}

\begin{proof}
    We prove the \textit{if} part by contraposition.
    
    If we have an infinite receiver, by Lemma \ref{receiver}, $C^*(G)$ is not RFD. 
    
    If we have a cycle with an exit, by Lemma \ref{MyNecessaryCondition}, $C^*(G)$ is not RFD.
    
    If we have an infinite backward chain. By Lemma \ref{backward}, $C^*(G)$ is not RFD.
    
    Finally suppose that there exists a vertex $v\in G^0$, such that there is no finite path from $v$ to a sink, a cycle nor an infinite emitter, by  Lemma \ref{lastCond}, $C^*(G)$ is not RFD.

    Now, for the \textit{only if} part, let $\mu$ be a finite path in $G$ and $F$ a finite subset of $r(\mu)G^1$. We will prove that if $Z(\mu\setminus F)$ is not empty, then it contains at least one element with finite orbit. 

    First, suppose $r(\mu)$ is not a sink. 
    
    If $F=r(\mu)G^1$ then $Z(\mu\setminus F)=\emptyset$. If $F \neq r(\mu)G^1$, we take one $\nu\in  r(\mu)G^1 \setminus F$. 

    From the hypothesis, there is a finite path from $r(\nu)$ to a sink, a cycle or an infinite emitter. We call this path $\tilde{y}$. 

    Suppose $r(\tilde{y})$ belongs to a cycle $a=(e_1,\dots,e_n)$ with $s(e_1)=r(\tilde{y})$ and $r(e_n)=s(e_1)$. Let $y = \mu\nu\tilde{y}a^{\infty}$ the path that goes from $\mu$ to $r(\tilde{y})$ and then turn infinitely around the cycle. We have that $y$ is in $\partial G$ and in $Z(\mu\backslash F)$. And $y$ contains finitely many vertices.

    If $r(\tilde{y})$ is a sink or and infinite emitter, then we set $y=\mu\nu\tilde{y}$. And again, $y$ is in $\partial G$, in $Z(\mu\backslash F)$ and contains finitely many vertices.

    Now suppose $r(\mu)$ is a sink. Then we define $y= \mu$. Then $y$ is in $\partial G$, in $Z(\mu\backslash F)$ and contains finitely many vertices.




    Then in all cases, $y\in \partial G$ and $y\in Z(\mu\backslash F)$ and $y$ contains finitely many vertice.
    We will show that the orbit of $y$ is finite. For this, it is sufficient to show that only finitely many paths reach $y$. Since $y$ contains finitely many vertices, we show that each of them receives finitely many paths. 

    Let $w_0$ be one of the finitely many vertices of $y$.

    We consider the subgraph $H$ of $G$ that contains the edges and the vertices from the paths that ends in $w_0$. 

    $H$ is a directed graph with no infinite receiver since there is none in $G$. By construction, $w_0$ is such that all vertices and edges of the graph $H$ belongs to a path that ends in $w_0$. 

We distinguish three cases.

Case 1 : $w_0$ is a source. Then there are no path  that reach $w_0$.

Case 2 : $w_0$ is not a source and $w_0$ is not on a cycle. Then there is no cycle in $H$ since it would mean there is a cycle with an exit and $G$ has none of them.
    
If $H$ contains infinitely many vertices, then by Lemma \ref{konig}  $H$ contains an infinite backward chain. 

But $H$ does not contain an infinite backward chain because $G$ does not. 

    So $H$ does not contain infinitely many vertices. So by Lemma \ref{finitepaths} $H$ contains finitely many paths that end in $w_0$.

    Then there are finitely many paths in $G$ that reach $w_0$.

Case 3 : $w_0$ is not a source and $w_0$ is on a cycle. Then we can assume this cycle has $n$ vertices, $x_1=w_0,x_2,...,x_n$. For the vertex $x_i$, we consider the subgraph $H_i$ of $G$ that contains the edges and the vertices from the paths that ends in $x_i$ except for the ones coming from the cycle containing $w_0$. (two different cycles can not contain $w_0$ since it would mean that there is a cycle with an exit). Then $H_i$ contains no cycle, again because it would make a cycle with an exit.

Same as for $H$, $H_i$ is a directed graph with no infinite receiver since there is none in $G$. By construction, $x_i$ is such that all vertices and edges of the graph $H_i$ belongs to paths that end in $x_i$. 

So as before, by Lemma \ref{konig}, if $H_i$ contains infinitely many vertices, then   $H_i$ contains an infinite backward chain. 

But $H_i$ is a subgraph of $G$ that does not contain an infinite backward chain. 

    So $H_i$ does not contains infinitely many vertices. So by Lemma \ref{finitepaths} $H_i$ contains finitely many paths that end in $x_i$.

Since the cycle has no exit, the number of paths that reach $w_0$ is the sum of the length of the cycle plus the sum of  the number of paths that reach $x_i$ for $i\in \{1,...,n\}$. 

Then there are finitely many paths that reach $w_0$. So in all three cases, finitely many paths reach $w_0$. So finitely many paths reach $y$. So the orbit $[y]$ is finite. So the periodic elements are dense in $\G^{(0)}$.  

By Theorem \ref{ThmRFD}, $C^*(\G)$ is  RFD. By Proposition \ref{Iso}, $C^*(\G) \cong C^*(G)$. So $C^*(G)$ is RFD.    
    
\end{proof}
 
\bibliographystyle{plain}
\bibliography{localbibliography}  











\end{document}